\newcommand{\edgeh}[2]{\partial_e(#1,#2)}
\newcommand{\bdryh}[2]{\partial_v(#1,#2)}
\newcommand{\faceth}[1]{\mathcal{F}_{#1}}
\newcommand{\tw}{\operatorname{tw}}
\newtheorem{theorem}{Theorem}[section]
\newtheorem{definition}[theorem]{Definition}
\newtheorem{lemma}[theorem]{Lemma}
\newtheorem{corollary}[theorem]{Corollary}
\newtheorem{remark}[theorem]{Remark}
\author[Eppstein et al.]{David Eppstein\affiliationmark{1}\thanks{Research of David Eppstein was supported in part by NSF grant CCF-2212129.}
  \and Daniel Frishberg\affiliationmark{2}
  \and William Maxwell\affiliationmark{3}}
\title{On the expansion of Hanoi graphs}
\affiliation{
  University of California, Irvine, CA, United States\\
  California Polytechnic State University, San Luis Obispo, CA, United States\\
  Oregon State University, Corvallis, OR, United States}
\keywords{Tower of Hanoi, Hanoi graph, expansion, multicommodity flow, treewidth}
\begin{document}
\publicationdata
{vol. 28:2}
{2026}
{31}
{10.46298/dmtcs.16770}
{2025-10-23; None}
{2026-05-02}

\maketitle
\begin{abstract}
The famous \emph{Tower of Hanoi} puzzle involves moving $n$ discs of distinct sizes from one of $p\geq 3$ pegs (traditionally $p=3$) to another of the pegs, subject to the constraints that only one disc may be moved at a time, and no disc can ever be placed on a disc smaller than itself. Much is known about the \emph{Hanoi graph} $H_p^n$, whose $p^n$ vertices represent the configurations of the puzzle, and whose edges represent the pairs of configurations separated by a single legal move. In a previous paper, the present authors presented nearly tight asymptotic bounds of $O((p-2)^n)$ and $\Omega(n^{(1-p)/2}(p-2)^n)$ on the \emph{treewidth} of this graph for fixed $p \geq 3$. In this paper we show that the upper bound is tight, by giving a matching lower bound of $\Omega((p-2)^n)$ for the \emph{expansion} of $H_p^n$.
\end{abstract}

\section{Introduction}
\label{sec:intro}

\subsection{Background and motivation}
In the well-studied \emph{Tower of Hanoi} puzzle, one is given $n$ discs $\{d_1, \dots, d_n\}$ of increasing size, sitting on the first of $p$ pegs. The object is to move all of the discs to some specified peg, while moving one disc at a time, and without ever putting one disc $d_i$ on a smaller disc $d_j$, $j < i$. The \emph{Hanoi graph} $H_p^n$ is the graph whose vertices correspond to the $p^n$ possible configurations of the puzzle, and whose edges correspond to the pairs of configurations that differ by a single valid move.

\emph{Treewidth} (see \cref{sec:exptw}) is a graph connectivity property with broad applications in algorithm design and analysis (see e.g.~\cite{FPTsurvey}). Intuitively, treewidth measures how difficult it is to recursively decompose a graph via small vertex separators. The treewidth of large graphs having a natural combinatorial definition\textemdash such as Hanoi graphs\textemdash is of independent theoretical interest. Some results are known e.g. for \emph{Kneser} graphs~\citep{harveywood} and their generalizations~\citep{genkneser}.

\emph{Expansion} is a graph connectivity property that in some sense is stronger than treewidth; it measures the robustness of a graph with respect to ``bottlenecks'', or sparse cuts. Expansion has a close connection to \emph{multicommodity flow} problems via a kind of flow-cut duality. Prior work on the expansion of large graphs~\citep{babaiszegedy, eppfrishtri, kaibelexp} connects closely to the study of the \emph{mixing} properties of Markov chain Monte Carlo algorithms for counting and sampling problems~\citep{sinclair_1992, levin2017markov} (see also, e.g., \citep{anari2}) via the relationship between (combinatorial) expansion and \emph{spectral} expansion. Algorithmic questions involving expansion are also of interest (see e.g.~\cite{arv}).

Certain graph properties of the special case $p = 3$ of $H_p^n$ have been somewhat easier to analyze than the case $p \geq 4$. The Hanoi graph $H_3^n$ on three pegs is known to have treewidth exactly four~\citep{hanoitw}, and the diameter of $H_3^n$ is $2^n - 1$~\citep{cartesian}. On the other hand, for $p \geq 4$, only (nearly tight) asymptotic bounds are known on the treewidth~\citep{hanoitw}, and the diameter of $H_p^n$ is unknown for $p \geq 4$~\citep{cartesian} although values have been computed for small~$n$ when $p = 4$, and compared with theoretical bounds~\citep{mythsmaths}.

\subsection{Markov chain decomposition}
Our technique builds on existing decomposition techniques developed in the analysis of Markov chain Monte Carlo (MCMC) algorithms for sampling problems. These algorithms are simple random walks that converge to a target distribution of interest. The \emph{mixing time} of such an algorithm\textemdash the time it takes to converge to the target distribution\textemdash is of interest, and often difficult to bound. A common means of obtaining such a bound is to use a standard theorem that relates the \emph{Cheeger constant}\textemdash essentially the expansion of the state space of the chain, viewed as a graph, up to normalization factors\textemdash to the mixing time.  

In a prior work~\citep{eppfrishtri}, motivated by the question of the mixing time of a well-studied~\citep{mct, molloylb}  chain for sampling \emph{polygon triangulations} via a \emph{flip walk}, the first and second author developed a framework for bounding the expansion of a graph by recursively decomposing the graph into subgraphs, and analyzing the mutual connectivity of the subgraphs at each level. Prior decomposition techniques existed in the MCMC setting, but these were too lossy to be of use for the triangulation flip walk. The triangulation flip walk has, as its state space, a well-studied graph known as the \emph{associahedron}. Thus the problem of bounding the triangulation flip walk mixing time is equivalent to the problem of (lower-)bounding the expansion of the associahedron.

In a companion paper~\citep{eppfrisharx}, the first and second authors further developed the framework and applied it to a number of other Markov chains. 

\subsection{Our contribution}
In this paper, we apply the previous Markov chain decomposition framework to a large graph family that does not admit a natural Markov chain interpretation, showing that this framework is of independent interest beyond MCMC analysis. Furthermore, the family of graphs we are studying\textemdash Hanoi graphs\textemdash do not admit a direct application of that framework, as written. We further develop the framework and show it can nonetheless be applied to Hanoi graphs. 

The best known treewidth upper bound for~$H_p^n$ when~$p \geq 4$ is $O((p-2)^n)$~\citep{hanoitw}. Combining this known upper bound with a new lower bound, we prove the following:
\begin{restatable}{theorem}{thmhanoiexplb}
\label{thm:hanoiexplb}
The expansion of the Hanoi graph $H_p^n$, for fixed $p \geq 3$, is $\Theta(((p-2)/p)^n)$.
\end{restatable}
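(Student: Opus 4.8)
\emph{Upper bound.} That the expansion (edge expansion) $h(H_p^n)$ is $O(((p-2)/p)^n)$ is witnessed by a single cut: let $S$ be the set of configurations in which the largest disc $d_n$ rests on one fixed peg, so $|S| = p^{n-1} \le |V(H_p^n)|/2$. An edge leaves $S$ exactly when it moves $d_n$, and legality of such a move forces $d_1,\dots,d_{n-1}$ onto the $p-2$ pegs other than the source and destination of $d_n$; summing over the $p-1$ destinations gives $|\partial S| = (p-1)(p-2)^{n-1}$, so $|\partial S|/|S| = (p-1)((p-2)/p)^{n-1} = \Theta(((p-2)/p)^n)$ for fixed $p$. (This also follows from the treewidth upper bound $O((p-2)^n)$ of \cite{hanoitw}: since $H_p^n$ has maximum degree $\binom{p}{2}$, a separator of treewidth size yields a balanced cut of expansion $O((p-2)^n/p^n)$.) It remains to prove the matching lower bound $h(H_p^n) = \Omega(((p-2)/p)^n)$.

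\emph{Lower bound.} I would argue recursively, following the self-similar structure of Hanoi graphs and the decomposition framework of \cite{eppfrishtri,eppfrisharx}. Partition $V(H_p^n)$ into classes $V_1,\dots,V_p$ according to the peg holding $d_n$: each induced subgraph $H_p^n[V_j]$ is isomorphic to $H_p^{n-1}$ (the bottom disc never obstructs a smaller one), and for $i \ne j$ the edges between $V_i$ and $V_j$ form a partial matching of size exactly $(p-2)^{n-1}$, whose endpoints in $V_i$ are the ``gateway'' configurations $M_{i\to j}$ in which $d_1,\dots,d_{n-1}$ all avoid pegs $i$ and $j$. The plan is to build, recursively in $n$, a routing of the uniform all-pairs multicommodity flow in $H_p^n$: a demand pair with $d_n$ on a common peg is routed by the inductively built flow inside the corresponding copy of $H_p^{n-1}$; a demand pair with $d_n$ on distinct pegs $i$ and $j$ is routed by first spreading its demand inside $V_i$ onto $M_{i\to j}$, then crossing the matching, then spreading inside $V_j$. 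Spreading the load evenly over the $(p-2)^{n-1}$ gateways keeps the congestion on each matching edge at $O((p-2)^{-n})$; the main difficulty, taken up below, is to route the spreading inside the classes so that every internal edge also stays at $O((p-2)^{-n})$. Granting this, since edge expansion and the congestion $\rho$ of a uniform flow are related by $h(G) = \Omega(1/(|V(G)|\rho))$, a routing of congestion $O((p-2)^{-n})$ in $H_p^n$ yields $h(H_p^n) = \Omega((p-2)^n/p^n)$.

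\emph{Main obstacle.} The delicate step --- and the reason the framework of \cite{eppfrishtri} does not apply to Hanoi graphs ``as written'' --- is the routing onto the gateways, since the matching between two sibling classes has size $(p-2)^{n-1}$, an exponentially small fraction of the class size $p^{n-1}$. Treating ``route the demand of a large set to a small terminal set in an expander'' as a black box loses a $\mathrm{polylog}(p^{n-1}) = n^{\Theta(1)}$ factor (giving only $h(H_p^n) = \Omega(((p-2)/p)^n / n^{\Theta(1)})$), while naively multiplying the per-level connectivities --- of order $((p-2)/p)^{n-\ell}$ at level $\ell$ --- over all $n$ levels loses a catastrophic $((p-2)/p)^{\Theta(n^2)}$ factor. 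To avoid this I would exploit the self-similarity of the gateway sets: $M_{i\to j}$ is itself a sub-Hanoi graph, and decomposing $V_i \cong H_p^{n-1}$ by the peg of $d_{n-1}$ shows $M_{i\to j}$ to be spread over exactly the $p-2$ sub-classes in which $d_{n-1}$ avoids pegs $i$ and $j$, its intersection with each being a gateway set one disc smaller; so the route-to-gateways subproblem unrolls into the same recursion as the main argument. The real work, and the content of the framework extension the paper develops, is showing that these two interleaved recursions can be carried out so that the flow on any single edge stays $O((p-2)^{-n})$ --- equivalently, that all the factors, each depending on $p$ but not on $n$, telescope to $\Theta(((p-2)/p)^n)$ rather than to $\Theta(((p-2)/p)^n / n^{\Theta(1)})$ or $\Theta(((p-2)/p)^{\Theta(n^2)})$. (For $p = 3$ the difficulty evaporates: $M_{i\to j}$ is a single configuration and the target congestion is $O(1)$.) Finally, combined with the treewidth upper bound of \cite{hanoitw}, this expansion lower bound also certifies that $H_p^n$ has treewidth $\Theta((p-2)^n)$.
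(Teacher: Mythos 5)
Your upper bound is fine (the explicit cut $S=\{\tau : \tau_n = i\}$ with $|\partial S|=(p-1)(p-2)^{n-1}$ is a clean direct alternative to passing through the treewidth bound, which is the route the paper takes), and your lower-bound plan is the same as the paper's: partition by the peg of $d_n$ into $p$ copies of $H_p^{n-1}$, route cross-pair demands by spreading onto the gateway set, crossing the size-$(p-2)^{n-1}$ matching, and spreading again, and use the fact that the gateway set is a facet splitting evenly over the $p-2$ sub-classes avoiding pegs $i$ and $j$. But the proposal stops exactly where the proof has to be done: you yourself write that ``the real work \dots is showing that these two interleaved recursions can be carried out so that the flow on any single edge stays $O((p-2)^{-n})$,'' and you do not carry that out. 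That missing step is the entire content of the paper's Section~\ref{sec:handetails}, so what you have is a correct plan plus an accurate description of the obstacle, not a proof.

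Concretely, what is missing is the congestion accounting that makes the recursion close without loss. The paper decomposes the distribution problem $\pi_{dist}$ (flow sitting uniformly on a facet of $H_2\cong H_p^{n-1}$, to be spread uniformly over $H_2$) into: recursive distribution problems inside the $p-2$ subgraphs $H_{2,i}$, $3\le i\le p$, meeting the facet, each retaining a $(p-2)/p$ fraction of the arriving flow; transmission of a $1/p$ fraction across each matching $\edgeh{H_{2,i}}{H_{2,1}}$ and $\edgeh{H_{2,i}}{H_{2,2}}$; and facet-to-facet ``routing'' problems inside each $H_{2,i}$ (\cref{lem:fdistrecdef}). Two facts must then be proved, and neither appears in your sketch. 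First (\cref{lem:fdistroutrec}), the facet-to-facet routing problems can be solved with congestion proportional to their surplus and with \emph{no recursive gain}, because both facets decompose into $p-2$ equal-size facets of $H_p^{n-3}$ subgraphs, and subgraphs meeting only the source facet can be paired by an arbitrary matching with subgraphs meeting only the sink facet. Second (\cref{lem:hanpartialcong}), the per-level contributions add to $((p-2)/p)\sigma_{dist}+(2/p)\sigma_{dist}=\sigma_{dist}$ exactly, so the inductive hypothesis reproduces itself with no multiplicative loss depending on $n$, which is what feeds \cref{lem:hanoiindflow} and the additive $O((p/(p-2))^n)$ congestion recurrence. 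Without this accounting your own warning applies: composing the interleaved recursions naively loses at least a polynomial factor in $n$, and the telescoping you assert is precisely the claim that needs proof.
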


In Section~\ref{sec:prelim} we will define expansion and treewidth and observe a relationship between the two (\cref{rmk:twexp}); from this relationship it will be immediate that if the expansion of a graph in a given family of bounded-degree graphs (such as Hanoi graphs) $G = (V, E)$ is $\Omega(f(|V|))$, then the treewidth of the same graph is~$|V|\cdot \Omega(f(|V|))$. From this relationship, Theorem~\ref{thm:hanoiexplb}, and the upper bound on treewidth in~\citep{hanoitw} the following corollary is immediate (here $|V| = p^n$):
\begin{corollary}
\label{cor:hanoitwtight}
The treewidth of the Hanoi graph $H_p^n$, for fixed $p \geq 3$, is $\Theta((p-2)^n)$.
\end{corollary}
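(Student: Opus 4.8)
Given \cref{thm:hanoiexplb}, the corollary is a short consequence of the treewidth--expansion relationship recorded in \cref{rmk:twexp}. The plan is: first observe that for fixed $p$ the Hanoi graph $H_p^n$ has maximum degree at most $\binom{p}{2}$ --- for each unordered pair of pegs at most one legal move joins them, since of the two exposed discs only the smaller can be moved --- so $\{H_p^n\}_n$ is a bounded-degree family with $|V(H_p^n)| = p^n$. By \cref{thm:hanoiexplb} the expansion of $H_p^n$ is $\Omega(((p-2)/p)^n)$, and \cref{rmk:twexp}, which for a bounded-degree family converts an expansion lower bound $\Omega(f(|V|))$ into the treewidth lower bound $|V|\cdot\Omega(f(|V|))$, then gives treewidth at least $p^n\cdot\Omega(((p-2)/p)^n) = \Omega((p-2)^n)$. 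In the other direction, the treewidth of $H_p^n$ is $O((p-2)^n)$: for $p \ge 4$ this is the upper bound of \cite{hanoitw}, and for $p = 3$ the treewidth equals the constant $4$ \cite{hanoitw}, which is $O((p-2)^n) = O(1)$. Combining the two bounds gives treewidth $\Theta((p-2)^n)$, as claimed.

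The actual content is therefore entirely in \cref{thm:hanoiexplb}, which I may assume; I sketch how I would prove it. The easy half is the matching \emph{upper} bound on expansion. Fixing the peg holding the largest disc $d_n$ partitions the $p^n$ configurations into $p$ sets, each inducing a copy of $H_p^{n-1}$ (the bottom disc $d_n$ blocks no move, since every other disc is smaller and may sit on top of it), and every edge between two distinct copies is a move of $d_n$, which is legal exactly when the $n-1$ smaller discs occupy the $p-2$ pegs other than the source and destination of $d_n$; there are $(p-2)^{n-1}$ such configurations, hence $(p-2)^{n-1}$ edges between each pair of copies. Taking $S$ to be one copy gives $|S| = p^{n-1} \le p^n/2$ and $|\partial S| = (p-1)(p-2)^{n-1}$, so the expansion is at most $(p-1)(p-2)^{n-1}/p^{n-1} = O(((p-2)/p)^n)$.

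The hard half is the matching \emph{lower} bound $\Omega(((p-2)/p)^n)$, for which I would apply the recursive decomposition framework of \cite{eppfrishtri, eppfrisharx} to the very decomposition above: $H_p^n$ splits into $p$ copies of $H_p^{n-1}$ with projection graph $K_p$, each of whose $\binom{p}{2}$ edges carries multiplicity $(p-2)^{n-1}$; one then recurses inside each copy, with base case $H_p^1 = K_p$ of constant expansion. The framework should deliver an inequality of the shape $\Phi(H_p^n) \ge c\cdot\Phi(H_p^{n-1})$ with per-level factor $c = \Theta((p-2)/p)$, telescoping to $\Theta(((p-2)/p)^n)$. The main obstacle --- and, as the authors note, the reason the framework does not apply verbatim --- is twofold. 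First, this decomposition is not the Markov-chain-flavoured, comparatively homogeneous one the framework was stated for: the projection graph carries edge multiplicities that grow with $n$, so the framework must be re-derived in a ``chain-free'', weighted form. Second, and more seriously, the interface between copies $i$ and $j$ is not spread over either copy but concentrated on the set of configurations avoiding both pegs $i$ and $j$ --- a set whose induced structure is essentially that of $H_{p-2}^{n-1}$, occupying only a $((p-2)/p)^{n-1}$ fraction of the copy --- so the crucial estimate is that flow entering a copy along this small, recursively-defined interface can be dispersed over the whole copy while contributing only a $\Theta(p/(p-2))$ factor to the per-edge congestion (equivalently, the claimed $\Theta((p-2)/p)$ factor to the expansion). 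Establishing this dispersion bound, and confirming that the framework's losses over all $n$ levels collapse to a single $p$-dependent constant rather than an exponential-in-$n$ factor, is where I expect the real work, and the genuine departure from \cite{eppfrishtri, eppfrisharx}, to lie.
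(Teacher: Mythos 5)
Your derivation of the corollary is exactly the paper's: combine the expansion lower bound of \cref{thm:hanoiexplb} with \cref{rmk:twexp} (using that $H_p^n$ has bounded degree for fixed $p$) to get treewidth $\Omega((p-2)^n)$, and pair it with the $O((p-2)^n)$ treewidth upper bound of \cite{hanoitw}. Your accompanying sketch of \cref{thm:hanoiexplb} also matches the paper's actual route (the $p$-way decomposition with $(p-2)^{n-1}$-edge boundary matchings and a congestion-controlled dispersion of flow from the facet-shaped interfaces), so there is nothing to flag.
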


\section{Preliminaries}
\label{sec:prelim}
\subsection{Hanoi Graphs}

By $D = \{d_1, \dots, d_n\}$ we denote the set of $n$ discs in the Tower of Hanoi puzzle such that whenever $i > j$ the disc $d_i$ is larger than the disc $d_j$. By $[p] = \{1, \dots, p\}$ we denote the $p$ pegs. A \emph{configuration} in the Tower of Hanoi puzzle is given by a function mapping discs to pegs, i.e. $\tau \colon D \rightarrow [p]$. Equivalently write~$\tau$ as a tuple, $\tau = (\tau(d_1), \tau(d_2), \dots, \tau(d_n))$. Since no disc~$d_i$ can be placed on a smaller disc $d_j$, each~$\tau$ indeed uniquely specifies a puzzle state. By $H_p^n$ we denote the Hanoi graph associated with the Tower of Hanoi puzzle on $n$ vertices and $p$ pegs. The vertices of $H_p^n$ are given by the configurations (the functions $\tau \colon D \rightarrow [p]$). There is an edge between two configurations $\tau_1$ and $\tau_2$ if and only if a player can transition from $\tau_1$ to $\tau_2$ by making exactly one legal move (that is, moving a disc from one peg to another, and placing it on top of a larger disc).

We define a decomposition of a Hanoi graph~$H_p^n$ into $p$ induced Hanoi graphs isomorphic to $H_p^{n-1}$ as in~\cite{hanoitw}. For a configuration $\tau$ let $\tau_n$ denote its $n$th component. That is, $\tau_n$ denotes the peg on which the largest disc is placed in the configuration $\tau$. For $i=1,\dots,p$ the sets $\{ \tau \mid \tau_n = i \}$ partition the vertices of $H_p^n$ and the subgraphs induced by these sets are isomorphic to $H_p^{n-1}$. See \cref{fig:hanoi} for an illustration.

When referring to a Hanoi graph $H \cong H_p^n$, we will denote by $\{H_i\mid 1 \leq i \leq p\}$ the induced Hanoi subgraphs of~$H$ that are  induced by the vertex sets $\{\tau \mid \tau_n = i\}$. These subgraphs are mutually vertex-disjoint and are each isomorphic to $H_p^{n-1}$. We call an edge in $H^n_p$ a \emph{boundary edge} if its endpoints lie in two distinct subgraphs~$H_i,H_j$, and we call a vertex a \emph{boundary vertex} if it is incident to a boundary edge. Let $\edgeh{H_i}{H_j} = \{(u, v) \mid u \in H_i, v \in H_j, (u, v) \in E(H)\}$ be the set of boundary edges between a pair of subgraphs, and let $\bdryh{H_i}{H_j} = \{u \in H_i \mid \exists v \in H_j, (u, v) \in E(H)\}$ be the set of boundary vertices in~$H_i$ with a neighbor in~$H_j$.

We now analyze the structure of these subgraphs and the boundaries between them. First we define what we call a \emph{facet}:
\begin{definition}
\label{def:faceth}
If $H$ is a Hanoi graph isomorphic to $H_p^n$, then given $1 \leq i < j \leq p$, let $\faceth{ij}(H) = \faceth{ji}(H)$ be the set of vertices in $H$ that represent puzzle states having no discs on peg $i$ or peg $j$. Call $\faceth{ij}(H)$ a \emph{facet}.
\end{definition}
Under this definition, the restriction of a facet~$\faceth{ij}(H)$ to a Hanoi subgraph~$H_k$ of~$H$ is itself a facet, namely~$\faceth{ij}(H_k)$. Furthermore, by the symmetry among pegs, for all $1 \leq k,l \leq p$ with $k, l \notin \{i,j\}$, the restrictions~$\faceth{ij}(H_k)$ and~$\faceth{ij}(H_l)$ have the same cardinality. That is:
\begin{remark}
\label{rmk:facetdecomp}
Every nonempty facet~$\mathcal{F} = \faceth{ij}(H)$ within a graph~$H \cong H_p^n$, $n \geq 2$, is a union 
\[\mathcal{F}_{ij}(H) = \bigcup_{k\notin\{i,j\}}\faceth{ij}(H_k)\]
(where $\forall k, H_k\cong H_p^{n-1}$) of facets in~$H_p^{n-1}$ copies. All of these facets have the same size,~$|\mathcal{F}|/(p-2)$.
\end{remark}

We now show that the boundary between each $H_i$ and $H_j$ induces a facet within each of $H_i$ and $H_j$:
\begin{lemma}
\label{lem:facetbdry}
Partition $H_p^n$ into $p$ Hanoi subgraphs as $V(H_p^n) = \bigcup_{k=1}^p V(H_k)$, $H_k \cong H_p^{n-1} \forall k$. Then 
\[\faceth{ij}(H_i) = \bdryh{H_i}{H_j}\] 
for all $i,j$.
\end{lemma}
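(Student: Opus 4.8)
The plan is to prove the set equality by double inclusion, reasoning directly in each direction about which single legal move can change the peg occupied by the largest disc~$d_n$. The one observation driving both directions is that, since $d_n$ is the largest disc, it is always the bottommost disc on whatever peg it sits on; hence (a)~$d_n$ can be picked up only when its peg contains no other disc, and (b)~$d_n$ can be set down only on an empty peg. Consequently a single move changes the $n$th coordinate of a configuration if and only if it moves $d_n$, and such a move is available exactly when $d_n$'s current peg holds only $d_n$ and the destination peg is empty.

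For $\faceth{ij}(H_i) \subseteq \bdryh{H_i}{H_j}$, I would take $\tau \in \faceth{ij}(H_i)$. Then $\tau_n = i$ and, by the meaning of a facet inside the copy $H_i \cong H_p^{n-1}$ on discs $d_1,\dots,d_{n-1}$, none of $d_1,\dots,d_{n-1}$ lies on peg $i$ or peg $j$; so peg $i$ contains only $d_n$ and peg $j$ is empty. The move sending $d_n$ from peg $i$ to peg $j$ is therefore legal and produces a configuration $\tau'$ with $\tau'_n = j$, i.e.\ $\tau' \in H_j$, so $(\tau,\tau')$ is a boundary edge and $\tau \in \bdryh{H_i}{H_j}$.

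For the reverse inclusion I would take $\tau \in \bdryh{H_i}{H_j}$, so $\tau_n = i$ and $\tau$ has a neighbor $\tau'$ with $\tau'_n = j$. By the driving observation, the single move realizing this edge must move $d_n$ from peg $i$ to peg $j$, which is legal only if peg $i$ holds no disc other than $d_n$ and peg $j$ is empty; equivalently none of $d_1,\dots,d_{n-1}$ lies on peg $i$ or peg $j$, which is precisely the condition $\tau \in \faceth{ij}(H_i)$.

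I do not expect a real obstacle here; the only point requiring care is the asymmetric role of $d_n$ (it never rests on another disc and never receives one), which pins down "legal move of $d_n$" as "source peg isolated, destination peg empty", together with the bookkeeping that $\faceth{ij}(H_i)$ must be read as the facet intrinsic to the subgraph $H_i$, a statement only about the placement of the $n-1$ smaller discs, exactly as set up after \cref{def:faceth}.
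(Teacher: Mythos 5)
Your proof is correct and is essentially the paper's own argument, just spelled out as an explicit double inclusion: the paper likewise observes that the boundary vertices of $H_i$ toward $H_j$ are exactly the configurations in which the largest disc can legally move from peg $i$ to peg $j$, i.e.\ those with no smaller disc on peg $i$ or $j$, which is the facet $\faceth{ij}(H_i)$ by Definition~\ref{def:faceth}. No gap; your extra care about why any $H_i$--$H_j$ edge must move the largest disc is exactly the implicit step in the paper's one-paragraph proof.
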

\begin{proof}
The boundary set $\bdryh{H_i}{H_j}$ is precisely the set of vertices in $H_i$ whose configurations in the $H_p^{n-1}$ graph isomorphic to $H_i$ have no disc on peg $i$ or peg $j$. This is because those are the configurations in which the largest disc in $H_p^n$ can be moved from peg $i$ to peg $j$. This set is precisely $\faceth{ij}(H_i)$, by Definition~\ref{def:faceth}.
\end{proof}

We now have the following nice structure:
\begin{restatable}{lemma}{lemhanoimatch}
\label{lem:hanoimatch}
Every pair among the $p$ Hanoi subgraphs comprising $H_p^n$ has as its set of boundary edges a matching of size $(p-2)^{n-1}$, i.e.
    \[
    |\bdryh{H_i}{H_j}| = |\bdryh{H_j}{H_i}| = |\edgeh{H_i}{H_j}| = (p-2)^{n-1}
    \]
\end{restatable}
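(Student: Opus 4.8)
The plan is to prove the claim by a two-step argument: first establish that $\edgeh{H_i}{H_j}$ is a perfect matching between the boundary sets $\bdryh{H_i}{H_j}$ and $\bdryh{H_j}{H_i}$, and then count the size of one of these boundary sets by an inductive facet-counting argument. For the matching claim, observe that each boundary edge of $H_p^n$ between $H_i$ and $H_j$ corresponds to moving the largest disc $d_n$ from peg $i$ to peg $j$ (or vice versa); this move is reversible and the configuration of all smaller discs is unchanged. Hence a vertex $u \in \bdryh{H_i}{H_j}$ has exactly one neighbor in $H_j$, namely the configuration obtained by changing only $\tau_n$ from $i$ to $j$, and symmetrically for vertices in $\bdryh{H_j}{H_i}$. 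This gives a bijection $\bdryh{H_i}{H_j} \leftrightarrow \bdryh{H_j}{H_i}$ whose graph is exactly $\edgeh{H_i}{H_j}$, so all three quantities in the displayed equation are equal; it remains to show the common value is $(p-2)^{n-1}$.

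For the count, I would use \cref{lem:facetbdry}, which identifies $\bdryh{H_i}{H_j}$ with the facet $\faceth{ij}(H_i) \cong \faceth{ij}(H_p^{n-1})$, reducing everything to computing $|\faceth{ij}(H_p^{n})|$, the number of puzzle configurations of $n$ discs on $p$ pegs using neither peg $i$ nor peg $j$. This is immediate by a direct combinatorial count, but to stay within the induction I would instead appeal to \cref{rmk:facetdecomp}: for $n \geq 2$, $\faceth{ij}(H_p^n)$ decomposes as a disjoint union over the $p-2$ choices of peg for $d_n$ (the peg must be neither $i$ nor $j$), each summand being a facet $\faceth{ij}(H_k) \cong \faceth{ij}(H_p^{n-1})$. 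Thus $|\faceth{ij}(H_p^n)| = (p-2)\,|\faceth{ij}(H_p^{n-1})|$, and the base case $n = 1$ gives $|\faceth{ij}(H_p^1)| = p - 2$ (the single disc sits on one of the $p-2$ allowed pegs), so $|\faceth{ij}(H_p^n)| = (p-2)^n$. Therefore $|\bdryh{H_i}{H_j}| = |\faceth{ij}(H_p^{n-1})| = (p-2)^{n-1}$, completing the proof.

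I do not expect a serious obstacle here — the lemma is essentially bookkeeping, and all the structural content has already been set up by \cref{def:faceth}, \cref{rmk:facetdecomp}, and \cref{lem:facetbdry}. The only point requiring a little care is the matching claim: one must verify that a boundary vertex in $H_i$ has a \emph{unique} neighbor in $H_j$, which relies on the fact that exactly one legal move (moving $d_n$) can take a configuration with $\tau_n = i$ to one with $\tau_n = j$, and that all smaller-disc moves keep the vertex inside $H_i$. This is straightforward from the definition of the Hanoi graph edges, so the whole argument is short.
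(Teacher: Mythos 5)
Your proof is correct and follows essentially the argument the paper intends (it defers the proof to~\cite{hanoitw}, but the supporting structure via \cref{lem:facetbdry} is exactly what you use): boundary edges are precisely moves of the largest disc, hence form a perfect matching between the two boundary facets, and the facet size is $(p-2)^{n-1}$. Your inductive count via \cref{rmk:facetdecomp} is a harmless detour from the direct observation that each of the $n-1$ smaller discs may sit on any of the $p-2$ remaining pegs.
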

(Our previous paper~\cite{hanoitw} gives essentially the same proof of this fact.)

\subsection{Expansion and treewidth}
\label{sec:exptw}
Our aim in this paper is to give asymptotically tight bounds on two key parameters for Hanoi graphs: \emph{expansion} and \emph{treewidth}. A graph with large expansion has no sparse cuts; a graph with large treewidth may have sparse cuts, but in a sense we will describe shortly, cannot be recursively decomposed via sparse cuts. Therefore, a lower bound on expansion induces a lower bound on treewidth, and an upper bound on treewidth induces an upper bound on expansion.

We now make these notions precise: given a graph $G = (V, E)$, define the \emph{expansion}, or \emph{edge expansion} $h(G)$ as
\[
  h(G) = \min_{S \subseteq V: |S| \leq |V|/2}\{|\partial S|/|S|\}
\]
where $\partial S = \{(u, v) \mid u \in S, v \in V \setminus S, (u, v) \in E\}$.

Define the \emph{vertex expansion} $h_v(G)$ as
\[
  h_v(G) = \min_{S \subseteq V: |S| \leq |V|/2}\{|\partial_v S|/|S|\}
\]
where $\partial_v S = \{v \in V \setminus S \mid \exists u \in S, (u, v) \in E\}$.

Unless specified as vertex expansion, we mean by ``expansion'' the edge expansion.

Treewidth, while closely related, is defined with respect to a \emph{tree decomposition}: given $G = (V, E)$, let a \emph{tree decomposition} of~$G$, denoted $\mathcal{T} = (\mathcal{X}, \mathcal{E})$ be a tree, whose nodes are a collection of \emph{bags} (a term referring simply to subsets of~$V$) $\mathcal{X} = \{X_1, X_2, \dots, X_k\}$, satisfying the following conditions:
\begin{enumerate}
    \item For all $v \in V$, some bag $X_i$ contains $v$.
    \item For all $(u, v) \in E$, some bag~$X_i$ contains $u$ and~$v$.
    \item For all $v \in V,$ the set of bags containing~$v$ induces a nonempty (and connected) subtree of~$\mathcal{T}$.
\end{enumerate}
Define the \emph{width} of a tree decomposition~$\mathcal{T} = (\mathcal{X}, \mathcal{E})$ to be one less than the cardinality of the largest bag in~$\mathcal{X}$. Define the \emph{treewidth} of a graph~$G$ to be the smallest possible width of a tree decomposition of~$G$. Denote the treewidth by $\tw(G)$.

Define a \emph{balanced vertex separator} $X \subseteq V$ in a graph $G = (V, E)$ as a set of vertices such that~$V \setminus X$ can be partitioned into two subsets~$A$ and~$B$ with $|V|/3 \leq |A| \leq |B| \leq 2|V|/3$, where no vertex in~$A$ has a neighbor in~$B$.

\begin{lemma}\citep{ericksontw}
\label{lem:balsep}
Given a graph~$G$, let~$t = \tw(G)$. Then~$G$ has a balanced vertex separator of size at most~$t+1$.
\end{lemma}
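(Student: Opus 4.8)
The plan is to read the separator off of an optimal tree decomposition. Fix a tree decomposition $\mathcal{T}=(\mathcal{X},\mathcal{E})$ of $G$ of width exactly $t$, so that every bag contains at most $t+1$ vertices; the separator $X$ will simply be one of these bags. All the work lies in choosing a bag that is ``central'' in $\mathcal{T}$, so that its removal cannot leave behind a large part of $G$.

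First I would record the usual structural fact about tree decompositions. For a bag $a$ with tree-neighbours $b_1,\dots,b_d$, let $T_{a\to b_k}$ be the connected component of $\mathcal{T}-a$ containing $b_k$, and let $S_k\subseteq V(G)$ be the set of vertices all of whose bags lie in $T_{a\to b_k}$. The connectivity condition forces every vertex outside $X_a$ to have all of its bags inside a single component of $\mathcal{T}-a$, so $V(G)$ is the disjoint union of $X_a$ and $S_1,\dots,S_d$; and the edge condition forces the two endpoints of any edge of $G$ into a common bag, which lies in at most one $T_{a\to b_k}$, so no edge of $G$ joins distinct $S_k$'s. Thus deleting $X_a$ (a set of at most $t+1$ vertices) separates the parts $S_1,\dots,S_d$ from one another, and it only remains to choose $a$ so that every $|S_k|$ is small. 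For that I would orient each edge $ab$ of $\mathcal{T}$ toward whichever endpoint has at least as many vertices of $G$ ``confined'' to its side (a vertex being confined to the $b$-side of $ab$ if all its bags lie on that side), breaking ties by a fixed ordering of the bags. Because $\mathcal{T}$ is a tree, any such orientation is acyclic and hence has a sink $a^{*}$; at $a^{*}$ every incident edge points inward, and since the vertices confined to the two sides of an edge form disjoint subsets of $V(G)$, the $S_k$-side of each edge incident to $a^{*}$ carries at most $|V(G)|/2$ vertices. Taking $X=X_{a^{*}}$ therefore yields a set of at most $t+1$ vertices whose deletion breaks $G$ into pairwise non-adjacent parts, each of size at most $|V(G)|/2$.

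It then remains to group the parts $S_k$ into the two sides $A$ and $B$. Since each part has at most $|V(G)|/2$ vertices and the parts jointly contain $|V(G)|-|X|$ vertices, I would first isolate any part of size at least $|V(G)|/3$ as a side of its own, and otherwise add the (now all small) parts to $A$ one at a time until $|A|$ enters the window $[|V(G)|/3,\,2|V(G)|/3]$, letting $B$ collect the remainder; since no edge of $G$ joins distinct parts, no edge joins $A$ to $B$, so $X$ is a balanced vertex separator of size at most $t+1$. I expect the only delicate point to be this final step — arranging the parts so that both $A$ and $B$ land in the asymmetric $\tfrac13$–$\tfrac23$ window — and it goes through as long as $t+1$ is small relative to $|V(G)|$, which is exactly the regime we need here: for $H_p^n$ we will have $t=O((p-2)^n)=o(p^n)=o(|V(G)|)$. (The statement under the precise normalization of ``balanced'' needed in full generality is the one cited from~\cite{ericksontw}.)
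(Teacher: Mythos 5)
The paper does not actually prove this lemma; it is imported wholesale from~\cite{ericksontw}, so there is no internal argument to compare yours against. What you give is the standard ``centroid bag'' proof, and its core is correct: the bookkeeping showing that for any bag $X_a$ the remaining vertices split into parts $S_1,\dots,S_d$ (one per component of $\mathcal{T}-a$) with no $G$-edge between distinct parts is exactly right, and your orientation argument is sound --- any orientation of the tree is acyclic, hence has a sink $a^*$, and at $a^*$ the disjointness of the two confined sets across each incident tree edge gives $|S_k|\leq |V|/2$ for every part, with $|X_{a^*}|\leq t+1$ since the decomposition has width $t$.

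The one genuine wrinkle is the final regrouping into the window $|V|/3\leq |A|\leq |B|\leq 2|V|/3$ demanded by the paper's definition, and you have correctly identified it yourself. Grouping whole parts can leave the smaller side short of $|V|/3$ by up to roughly $|X|\leq t+1$ vertices (e.g.\ three parts of size $(|V|-|X|)/3$ each), and in fact no proof can close this gap in full generality: under the paper's literal normalization the statement fails when $t+1$ is a constant fraction of $|V|$ (consider a clique together with a few isolated vertices, where one side of any separation can never reach $|V|/3$). So what your argument fully establishes is the standard form of the theorem --- a bag of size at most $t+1$ whose removal leaves pairwise non-adjacent parts each of size at most $|V|/2$, equivalently a partition with $|A|,|B|\leq 2|V|/3$ and the lower bound weakened to $|V|/3-(t+1)$ --- which is all that \cref{cor:sepvexp} and the rest of the paper need, since there $t=o(|V|)$ and the bound is asymptotic. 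In short: correct in substance, matching the cited source's approach, with the caveat confined to the paper's slightly over-strong phrasing of ``balanced'' rather than to your argument.
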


\cref{lem:balsep} immediately implies the following, by the definition of a balanced separator and of vertex expansion:
\begin{corollary}
\label{cor:sepvexp}
Let $G = (V, E)$ be a graph and let $t = \tw(G)$. Then the vertex expansion of~$G$ is at most~$\frac{3(t+1)}{|V|}$.
\end{corollary}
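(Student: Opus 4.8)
The plan is to read the conclusion straight off the balanced separator guaranteed by \cref{lem:balsep}, with no additional work beyond unwinding the definition of vertex expansion. First I would apply \cref{lem:balsep} to obtain a balanced vertex separator $X \subseteq V$ with $|X| \le t+1$, together with the partition $V \setminus X = A \cup B$ (with $A \cap B = \emptyset$) promised by the definition of a balanced separator, so that $|V|/3 \le |A| \le |B| \le 2|V|/3$ and no vertex of $A$ is adjacent to a vertex of $B$.

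Next I would check that $A$ is an admissible test set in the minimum defining $h_v(G)$, i.e. $|A| \le |V|/2$. This needs a touch more than the bound $|A| \le 2|V|/3$ stated in the definition: since $|A| \le |B|$ and $|A| + |B| = |V| - |X| \le |V|$, we get $2|A| \le |V|$, hence $|A| \le |V|/2$. Also $|A| \ge |V|/3 > 0$, so the ratio $|\partial_v A|/|A|$ is well-defined and $A$ participates in the minimization.

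The key observation is then that $\partial_v A \subseteq X$. Indeed, any $v \in \partial_v A$ lies in $V \setminus A = B \cup X$ and is adjacent to some $u \in A$; since there are no edges between $A$ and $B$, we cannot have $v \in B$, so $v \in X$. Therefore $|\partial_v A| \le |X| \le t+1$, and consequently
\[
  h_v(G) \;\le\; \frac{|\partial_v A|}{|A|} \;\le\; \frac{t+1}{|V|/3} \;=\; \frac{3(t+1)}{|V|},
\]
which is the claimed bound. I do not anticipate any genuine obstacle: all of the substance is already packaged in \cref{lem:balsep}, and the corollary is a direct translation of ``balanced separator'' into ``small vertex expansion''. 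The only point requiring a moment's care is the elementary argument that $|A| \le |V|/2$ (not merely $|A| \le 2|V|/3$), so that $A$ is a legitimate competitor in the definition of $h_v(G)$.
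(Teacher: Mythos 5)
Your proof is correct and is exactly the argument the paper has in mind: the paper states the corollary as an immediate consequence of \cref{lem:balsep} ``by the definition of a balanced separator and of vertex expansion,'' and your write-up simply spells out that routine verification (including the small check that $|A| \le |V|/2$, which indeed follows from $|A| \le |B|$). No gaps; same approach as the paper.
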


We observe the following consequence of the definitions of treewidth, expansion, and vertex expansion and of \cref{cor:sepvexp}:
\begin{remark}
\label{rmk:twexp}
    In a graph~$G$ of degree~$\Delta$, letting $t = \tw(G)$, we have 
    \[h_v(G) \leq h(G) \leq \Delta\cdot h_v(G) \leq \frac{3\Delta(t+1)}{|V|}\]
\end{remark}

\subsection{Expansion and multicommodity flows}
\label{sec:expmcflow}
\emph{Multicommodity flows} are a standard tool for lower-bounding the expansion of a graph~\citep{sinclair_1992, kaibelexp}. Let $G = (V, E)$ be a graph. Replace each (undirected) edge $\{u, v)\} \in E$ by two directed edges $(u, v)$ and $(v, u)$. Denote the resulting set of directed edges by~$A$. A multicommodity flow~$f$ is a collection of functions~$\{f_{st}: A \rightarrow \mathbb{R}_{\geq 0} \mid (s, t) \in V \times V\}$ obeying the standard flow axioms:
\begin{enumerate}
    \item $\sum_{v \in N(s)} f_{st}(s, v) = \sum_{v \in N(t)} f_{st}(v, t) = 1$~\footnote{In this paper we only consider \emph{uniform} multicommodity flows, in which the right-hand side of the equality in the first condition is~$1$.}
    \item for all $v \in V \setminus \{s, t\}$, $\sum_{u \in N(v)} f_{st}(u, v) = \sum_{w \in N(v)} f_{st}(v, w)$
\end{enumerate}
By $N(v)$ we mean $\{u \in V: (u, v) \in E\}$.

Define the \emph{congestion} of a multicommodity flow~$f$ as $\frac{1}{|V|}\max_{(u, v) \in A}\sum_{(s, t) \in V \times V}f_{st}(u, v)$.

The following standard inequality relates multicommodity flows to expansion (see e.g.~\cite{sinclair_1992}):
\begin{lemma}
\label{lem:flowexp}
If a graph $G = (V, E)$ admits a multicommodity flow~$f$ with congestion at most~$\rho$, then
\[h(G) \geq 1/(2\rho)\]
\end{lemma}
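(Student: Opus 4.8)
The plan is to fix an arbitrary cut $S \subseteq V$ with $|S| \le |V|/2$ and show directly that $|\partial S|/|S| \ge 1/(2\rho)$; taking the minimum over all such $S$ then gives $h(G) \ge 1/(2\rho)$. The argument is a counting/averaging argument that plays the total amount of flow \emph{forced} across the cut against the per-edge load allowed by the congestion bound.

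First I would isolate the commodities ``charged'' to the cut: the pairs $(s,t)$ with $s \in S$ and $t \in V \setminus S$, of which there are exactly $|S|\cdot|V\setminus S|$. For each such commodity, summing the flow-conservation axioms over the vertices of $S$ shows that the net flow of $f_{st}$ on the directed edges going from $S$ to $V \setminus S$ equals $1$ minus the flow entering $s$; since (by the flow-decomposition theorem) we may assume without loss of generality that each $f_{st}$ is supported on simple $s$--$t$ paths — deleting circulations only lowers every edge's load — nothing enters $s$, so the net, and hence the gross, outward flow of $f_{st}$ across $\partial S$ is at least $1$. Summing over the charged commodities, the total flow pushed across $\partial S$ by these commodities is at least $|S|\cdot|V\setminus S|$.

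Next I would bound the same quantity from above: by the definition of congestion, each directed edge carries total flow at most $\rho|V|$ over all commodities, hence in particular over the charged ones, and there are $|\partial S|$ directed edges leaving $S$, so the total is at most $|\partial S|\cdot\rho|V|$. Combining the two estimates gives $|S|\cdot|V\setminus S| \le |\partial S|\cdot\rho|V|$, and since $|S|\le|V|/2$ forces $|V\setminus S|\ge|V|/2$, this rearranges to $|\partial S|/|S| \ge 1/(2\rho)$, as needed.

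The only real subtlety — the step I expect to be the main (though minor) obstacle — is justifying that each charged commodity genuinely pushes a full unit of flow across the cut in the outward direction: the stated axioms pin down only the out-flow at $s$ and the in-flow at $t$, so a pathological flow could in principle recirculate flow through $s$. This is dispatched by the flow-decomposition theorem as above (equivalently, by noting that we only ever need ``gross outward flow $\ge$ net outward flow'' together with the fact that passing to the path part of $f_{st}$ cannot increase congestion). Everything else — counting the charged pairs, exchanging the order of a double summation, and using $|V\setminus S|\ge|V|/2$ — is routine bookkeeping.
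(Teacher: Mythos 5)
The paper does not prove this lemma at all---it cites it as a standard inequality (Sinclair-style flow/cut duality)---so the comparison is with the standard argument, which is exactly the cut-charging computation you give: charge the $|S|\,|V\setminus S|$ commodities crossing the cut, lower-bound the flow they push across $\partial S$, upper-bound the per-edge load by $\rho|V|$, and use $|V\setminus S|\ge |V|/2$. That skeleton is right, and with the standard definition of a unit $s$--$t$ flow it is a complete proof.

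The step you single out as the one subtlety, however, is not dispatched by your flow-decomposition argument, and under the paper's axioms as literally written it cannot be. The axioms only fix the \emph{gross} out-flow at $s$ and the \emph{gross} in-flow at $t$; they say nothing about flow entering $s$ or leaving $t$. So consider $f_{st}$ that sends $\varepsilon$ along an $s$--$t$ path, circulates $1-\varepsilon$ around a short cycle through $s$, and circulates $1-\varepsilon$ around a short cycle through $t$ (the latter supplying $t$'s missing in-flow): all axioms hold, yet when you delete circulations the surviving $s$--$t$ path part has value $1-\mathrm{in}(s)=\varepsilon$, not $1$, so your conclusion ``gross outward flow across $\partial S$ is at least $1$'' fails. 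This is not repairable by a cleverer decomposition: with such cheating flows one can give a dumbbell graph (two cliques joined by one edge) a low-congestion ``multicommodity flow'' even though its expansion is $O(1/|V|)$, so the lemma itself is false under the literal axioms. The correct resolution is to use the intended standard definition, in which each $f_{st}$ has net divergence $+1$ at $s$ and $-1$ at $t$ (equivalently, is a sum of $s$--$t$ paths carrying one unit in total); with that reading your own ``gross $\ge$ net $=1$'' observation closes the argument immediately, no flow decomposition needed, and the rest of your bookkeeping is exactly the standard proof.
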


\section{Extending a prior framework}
\label{sec:framework}
In two prior works~\citep{eppfrishtri, eppfrisharx} the first and second author presented a framework consisting of a set of conditions that guarantee \emph{rapid mixing} for the natural random walk on a family of graphs on $N = f(n)$ vertices, where the family is parameterized by $n << N$. This condition is equivalent to the graph having expansion $\Omega\left(1/n^{O(1)}\right)$. They applied the framework to obtain rapid mixing results for random walks on the \emph{triangulations} of a convex point set, as well as for a number of graph-theoretic sampling problems in bounded-treewidth graphs.

As we discussed in that paper, the framework is an analogue of the \emph{projection-restriction} technique of Jerrum, Son, Tetali, and Vigoda~\citep{jerrumprojres}. The idea is to recursively partition the vertices of the graph into a small number of induced subgraphs in the same family, allowing a recursive decomposition. One can then construct a multicommodity flow recursively, by first specifying the amount of flow to send across each edge between a pair of the subgraphs, and then solving the resulting flow subproblems recursively.

The framework in~\cite{eppfrishtri} is as follows:
\begin{lemma}
\label{lem:eppfrishfw}
Let~$\mathcal{F} = \{\mathcal{M}_1, \mathcal{M}_2, \dots\}$ be an infinite family of connected graphs, parameterized by a value $n$. Suppose that for every graph~$\mathcal{M}_n = (\mathcal{V}_n, \mathcal{E}_n) \in \mathcal{F}$, for $n \geq 2$, the vertex set~$\mathcal{V}_n$ can be partitioned into a set~$\mathcal{S}_n$ of classes inducing subgraphs of~$\mathcal{M}_n$ that satisfy the following conditions:
\begin{enumerate}
\item\label{addcondcart} Each subgraph is isomorphic to a smaller graph $\mathcal{M}_{i} \in \mathcal{F}$, $i < n$.
\item\label{addcondnum} The number of classes is at most~$n^{O(1)}$.
\item\label{addcondmatch} For every pair of classes~$\mathcal{C}, \mathcal{C'}\in \mathcal{S}_n$, the set of edges between the subgraphs induced by the two classes is a matching of size at least~$\frac{|\mathcal{C}||\mathcal{C'}|}{|\mathcal{V}_n|}.$
\item\label{addcondbdry} Given a pair of classes~$\mathcal{C}, \mathcal{C'} \in \mathcal{S}_n$, the set of vertices in $\mathcal{C}$ having a neighbor in $\mathcal{C'}$ induces a subgraph of $\mathcal{C}$ that is isomorphic to a smaller graph $\mathcal{M}_{j} \in \mathcal{F}$, $j < n$.
\end{enumerate}

Suppose further that~$|\mathcal{V}_1| = 1$. Then the expansion of~$\mathcal{M}_n$ is~$\Omega(1/(\kappa(n)n))$, where~$\kappa(n) = \max_{1\leq i\leq n}|\mathcal{S}_i|$ is the maximum number of classes in any~$\mathcal{M}_i, i\leq n$.
\end{lemma}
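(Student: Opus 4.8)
My plan is to construct a multicommodity flow on $\mathcal{M}_n$ routing one unit between every ordered pair of vertices with congestion $O(\kappa(n)\cdot n)$, and then invoke \cref{lem:flowexp} to conclude $h(\mathcal{M}_n)\ge 1/(2\cdot O(\kappa(n)n))=\Omega(1/(\kappa(n)n))$. The flow is defined by recursion on the parameter $n$, following the class decomposition $\mathcal{S}_n$: the base case $|\mathcal{V}_1|=1$ is vacuous (there is nothing to route), and condition~\ref{addcondcart} guarantees that every recursive call is on a strictly smaller member of $\mathcal{F}$, so the recursion bottoms out after at most $n$ levels. This recursion depth is exactly what contributes the factor $n$ to the congestion.

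For the recursive step, fix $\mathcal{M}_n$ with classes $\mathcal{C}_1,\dots,\mathcal{C}_r$, $r\le\kappa(n)$ by condition~\ref{addcondnum}. A pair $(s,t)$ with both endpoints in one class $\mathcal{C}_a$ is routed using the flow supplied by the inductive hypothesis on the subgraph induced by $\mathcal{C}_a$ (a smaller member of $\mathcal{F}$, by condition~\ref{addcondcart}). For a pair $(s,t)$ with $s\in\mathcal{C}_a$ and $t\in\mathcal{C}_b$, $a\neq b$, the flow must cross the boundary: by condition~\ref{addcondmatch} the boundary edges between $\mathcal{C}_a$ and $\mathcal{C}_b$ form a matching $M_{ab}$ with $|M_{ab}|\ge|\mathcal{C}_a||\mathcal{C}_b|/|\mathcal{V}_n|$, so if each such pair spreads its unit evenly over $M_{ab}$, the total $\mathcal{C}_a$-to-$\mathcal{C}_b$ demand of $|\mathcal{C}_a||\mathcal{C}_b|$ units is split so that each boundary edge carries $O(|\mathcal{V}_n|)$ units, contributing only $O(1)$ to the congestion at this level. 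It then remains, inside $\mathcal{C}_a$, to route each source $s$ to the $\mathcal{C}_a$-endpoints of $M_{ab}$, and symmetrically inside $\mathcal{C}_b$ from the $\mathcal{C}_b$-endpoints of $M_{ab}$ to $t$. This is where condition~\ref{addcondbdry} is used: the set of those endpoints is $\bdryh{\mathcal{C}_a}{\mathcal{C}_b}$, which is itself (the vertex set of) a member of $\mathcal{F}$ smaller than $\mathcal{M}_n$, so the ``transport to the boundary'' is again an instance the recursion can handle, and, aggregated over all cross-class pairs with source in $\mathcal{C}_a$, it amounts to an additional demand on $\mathcal{C}_a$ that gets folded into the recursive construction there.

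The remaining work is the congestion bookkeeping, which I expect to be the main obstacle. One wants to show that the flow through any single edge of $\mathcal{M}_n$ is $O(\kappa(n)\,n\,|\mathcal{V}_n|)$; since every edge is either a boundary edge (already handled above) or lies inside exactly one class at the first level, the natural route is to bound the contribution accumulated over the at most $n$ recursion levels, with each level contributing $O(\kappa(n))$ to the congestion. The subtlety is that a single vertex of a class may be a boundary vertex toward many of the other $\kappa(n)-1$ classes, so the transport demand routed within a class is substantially heavier than the native all-pairs demand there; the crux is to show, using conditions~\ref{addcondmatch} and~\ref{addcondbdry} in tandem, that this extra load accumulates \emph{additively} across recursion levels rather than multiplicatively, so that the total congestion is $O(\kappa(n)\,n)$ and not, say, $\kappa(n)^{\Theta(n)}$. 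Once this accounting is in place, \cref{lem:flowexp} gives the claimed bound. (Condition~\ref{addcondnum} is then what turns $\Omega(1/(\kappa(n)n))$ into a polynomial lower bound $\Omega(1/n^{O(1)})$, equivalent to rapid mixing of the natural walk, though that reformulation is not needed for the statement as written.)
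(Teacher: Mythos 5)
Your high-level plan is the right one, and it matches how this statement is actually established (in the cited prior work, and in the adaptation carried out in \cref{sec:hanoiexplb,sec:handetails} of this paper): build a uniform multicommodity flow recursively along the class decomposition, spread each cross-class demand evenly over the boundary matching guaranteed by condition~\ref{addcondmatch}, and convert the congestion bound into expansion via \cref{lem:flowexp}. The transmission accounting you do is also correct: since each boundary edge serves exactly one pair of classes and carries at most $|\mathcal{C}||\mathcal{C}'|/|M_{\mathcal{C}\mathcal{C}'}|\leq|\mathcal{V}_n|$ units, its normalized congestion at the top level is $O(1)$.

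However, there is a genuine gap, and you have named it yourself: the entire difficulty of the lemma lives in the within-class ``transport to the boundary'' step, and your proposal does not carry it out. Two specific problems. First, you assert that because $\bdryh{\mathcal{C}}{\mathcal{C}'}$ induces a smaller member of $\mathcal{F}$ (condition~\ref{addcondbdry}), moving flow from a single source $s$ to that boundary ``is again an instance the recursion can handle.'' It is not: the inductive hypothesis only supplies a \emph{uniform all-pairs} flow inside a class, whereas here you must solve a flow problem with one concentrated source and a prescribed (boundary-supported) sink distribution. The actual argument decomposes this into a shuffling step (use the inductive uniform flow to spread $s$'s flow evenly over its class) followed by a concentration/distribution step that moves uniformly spread flow onto or off of the boundary set; it is in bounding \emph{that} step that conditions~\ref{addcondmatch} and~\ref{addcondbdry} are really used, exactly as in the $\pi_{shuf},\pi_{conc},\pi_{tran},\pi_{dist}$ decomposition of \cref{sec:hanoiexplb} and the recursive facet-routing analysis of \cref{sec:handetails}. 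Second, the claim that the resulting extra load ``accumulates additively across recursion levels rather than multiplicatively'' is precisely the theorem's content, and you explicitly defer it as an expected obstacle rather than proving it; without that accounting (showing each level adds $O(\kappa(n))$ congestion, giving $O(\kappa(n)n)$ overall instead of $\kappa(n)^{\Theta(n)}$), the proof is a plan rather than a proof. So the proposal correctly reproduces the architecture of the known argument but leaves its central quantitative step unestablished.
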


In fact, Eppstein and Frishberg~\citep{eppfrishtri} proved a stronger version of \cref{lem:eppfrishfw} that holds if the subgraphs in the decomposition are isomorphic to \emph{Cartesian products} of smaller graphs in the family. However, these Cartesian products are not relevant to the case of Hanoi graphs.

In our case, the induced subgraphs are straightforward to find: in an $H_p^n$ input graph, we define the subgraphs to be the $p$ copies of $H_p^{n-1}$ that correspond to playing the puzzle on the smallest $n - 1$ discs given each of the $p$ possible placements of the largest disc.

\cref{lem:eppfrishfw} requires a strong lower bound (condition~\ref{addcondmatch}) on the number of edges between a given pair of the subgraphs, namely that the number of edges must be at least equal to the product of the cardinalities of the two subgraphs, divided by the total number of vertices in the graph. This condition fails for Hanoi graphs: this ratio is inverse exponential in~$n$. Nonetheless, we show how to repair the framework and apply it to Hanoi graphs. Informally, we use the fact that although the number of edges is much smaller than the stated bound, the \emph{boundary set} between two subgraphs\textemdash the set of vertices in one subgraph $H_p^{n-1}$ having neighbors in a given other subgraph\textemdash is well distributed among the $H_p^{n-2}$ subgraphs that comprise $H_p^{n-1}$. We require also that each subgraph admits a decomposition into ``facets'' (\cref{sec:prelim}) allowing the construction of many ``parallel'' paths from such a boundary set to the other vertices within the subgraph. This avoids too much concentration of flow within a given vertex.

On the other hand, the decomposition of the Hanoi graph that we give here is simpler than those in~\cite{eppfrishtri}: that framework allowed each subgraph to be a \emph{Cartesian product} of smaller graphs; here we have simply a smaller similar graph\textemdash and that graph is in fact completely determined by its isomorphism to $H_p^{n-1}$.

Thus the main contribution of this paper, in addition to establishing tight expansion and treewidth bounds for $H_p^n$, is to give an extension of the framework where some of the conditions fail.

\begin{figure}
\includegraphics[height=12em]{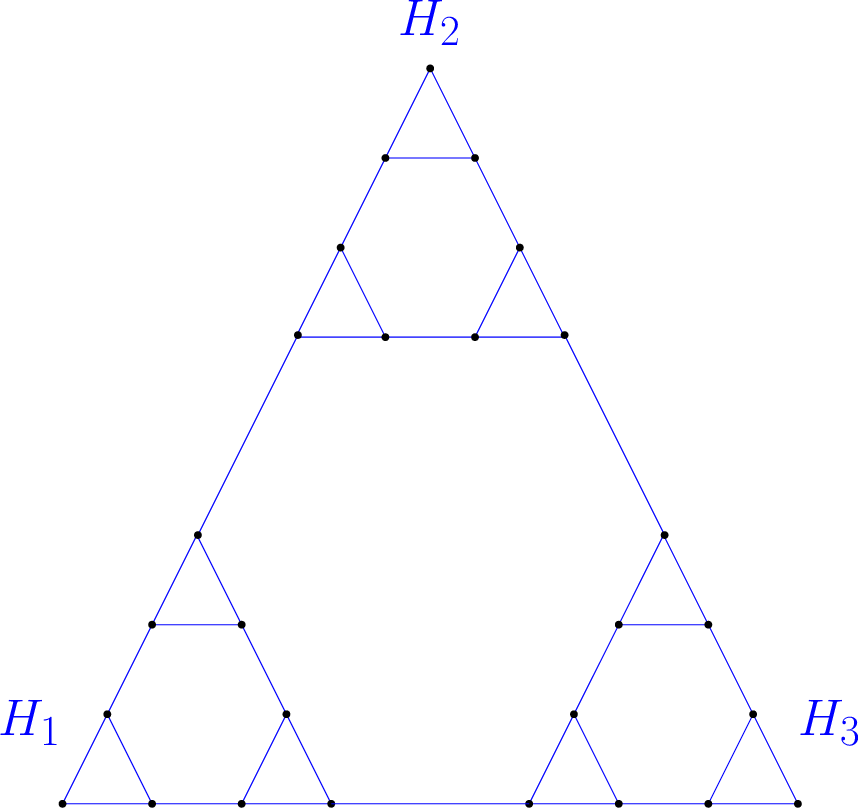}
\hspace*{1.5em}
\includegraphics[height=12em]{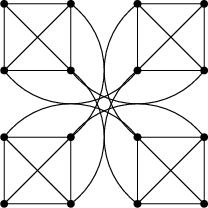}
\hspace*{1.5em}
\includegraphics[height=12em]{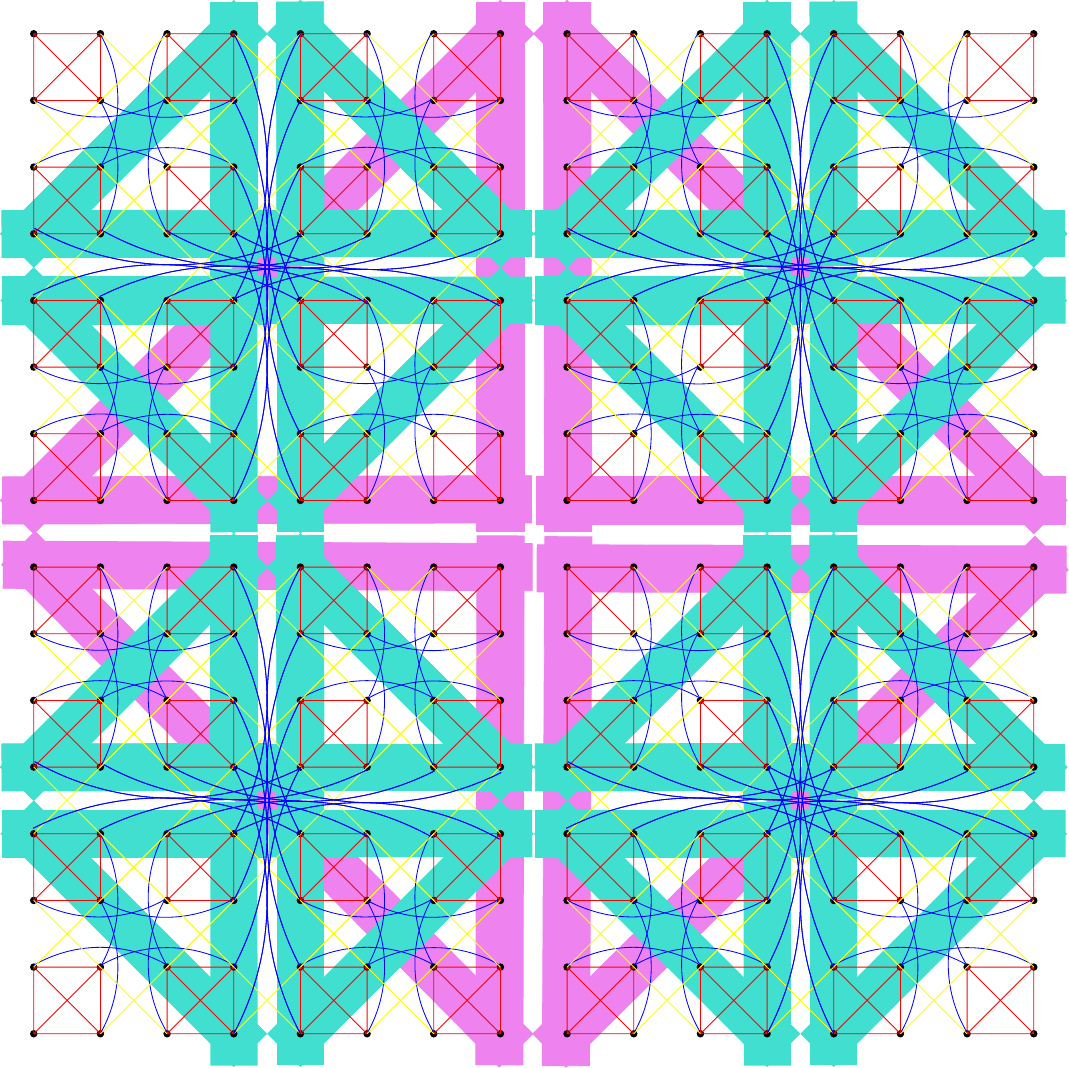}
\hspace*{1.5em}
\caption{The Hanoi graphs $H_3^3$ (left), $H_4^2$ (center), and $H_4^4$ (right), with their vertices arranged recursively in subgraphs. At the top level, the three (or four) subgraphs in each graph are induced by placements of the largest disc in the puzzle. In $H_4^4$, we depict in different colors the boundary matchings at different recursive levels of the graph. At the top two levels, we omit individual edges, and instead highlight the boundary sets in magenta (top level) and turquoise (second level).}
\label{fig:hanoi}
\end{figure}

\section{Large expansion for Hanoi graphs}
\label{sec:hanoiexplb}
In this section we adapt the framework of~\cite{eppfrishtri} to Hanoi graphs. We will prove the following:

\begin{restatable}{lemma}{lemhanoiindflow}
\label{lem:hanoiindflow}
Let $p \geq 3$ be fixed. Suppose there exists a multicommodity flow~$f$ in $H_p^{n-1}$ with congestion $\rho$. Then there exists a multicommodity flow in $H_p^n$ with congestion $\rho + O((p/(p-2))^n)$.
\end{restatable}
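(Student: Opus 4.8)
The plan is to construct the multicommodity flow in $H_p^n$ in two stages, following the recursive strategy sketched after \cref{lem:eppfrishfw}. Given a source-target pair $(s,t)$ in $H_p^n$, say with $s \in H_i$ and $t \in H_j$: if $i = j$, route all the flow inside the single subgraph $H_i \cong H_p^{n-1}$ using the assumed flow $f$. If $i \neq j$, we must ship one unit of commodity from $s$ to $t$ across the boundary. By \cref{lem:hanoimatch} the boundary $\edgeh{H_i}{H_j}$ is a matching of size only $(p-2)^{n-1}$, which is a $((p-2)/p)^{n-1}$ fraction of $|V(H_p^{n-1})| = p^{n-1}$ — far too few edges to absorb one full unit per pair if we naively spread $(s,t)$-flow uniformly over the matching. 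Instead, for a pair $(s,t)$ with $s\in H_i$, $t \in H_j$, I will route the unit of flow along the following scheme: send it from $s$ to the (essentially unique, by the matching structure and the facet decomposition) boundary vertex associated with $s$, across one matching edge to $H_j$, and then from that vertex to $t$; and symmetrically we average this with the mirror-image route that crosses near $t$. More precisely, I will use the facet structure: by \cref{lem:facetbdry}, $\bdryh{H_i}{H_j} = \faceth{ij}(H_i)$, and by \cref{rmk:facetdecomp} this facet is evenly distributed — size $(p-2)^{n-1}/(p-2)^{?}$ — across the sub-sub-copies, giving us many parallel conduits into the boundary so that no single vertex or edge inside $H_i$ is overloaded.

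The key steps, in order, are: (1) For each $H_i$, fix an auxiliary intra-subgraph flow that routes, for every vertex $v \in H_i$ and every boundary vertex $b \in \faceth{ij}(H_i)$, a small amount of flow from $v$ to $b$, spread so that the per-edge and per-vertex load is controlled; this is where the facet decomposition (\cref{rmk:facetdecomp}) and the recursive structure let us get many parallel paths. (2) Assemble the $(s,t)$-flow for $i \neq j$ by composing: $s \rightsquigarrow$ (distribution over $\bdryh{H_i}{H_j}$) $\to$ (matching edges) $\to$ (distribution over $\bdryh{H_j}{H_i}$) $\rightsquigarrow t$, averaging the "cross near $s$" and "cross near $t$" versions. (3) Bound the total congestion: the contribution from intra-subgraph $(s,s')$-pairs with $s,s'$ in the same $H_i$ is exactly $\rho$ (since $f$ is reused verbatim, and each directed edge of $H_p^n$ lies in exactly one $H_i$, with the $|V(H_p^n)| = p\cdot|V(H_p^{n-1})|$ normalization working out); the extra congestion comes from (a) the matching edges, each of which carries $(s,t)$-flow for all pairs whose route is assigned to it — there are at most $|H_i|\cdot|H_j| = p^{2(n-1)}$ such pairs spread over $(p-2)^{n-1}$ matching edges, contributing $O(p^{2(n-1)}/((p-2)^{n-1} p^n)) = O((p/(p-2))^n)$ per edge after normalization — and (b) the auxiliary intra-subgraph routing from step (1), which by the facet-based parallelism contributes a lower-order term absorbed into the same $O((p/(p-2))^n)$ bound.

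The main obstacle is step (1) together with part (b) of step (3): ensuring that funneling flow from every vertex of $H_i$ to the boundary facet $\faceth{ij}(H_i)$ — and doing this simultaneously for all $\binom{p}{2}$ choices of the partner $j$ — does not create a bottleneck at vertices or edges near the facet that is worse than $O((p/(p-2))^n)$. The facet has size $(p-2)^{n-1}$ while the subgraph has $p^{n-1}$ vertices, so on average each facet vertex must absorb flow destined across the boundary from $\Theta((p/(p-2))^{n-1})$ vertices; the point is to show this can be spread out recursively using \cref{rmk:facetdecomp}, which says a facet in $H_p^{n-1}$ breaks into $p-2$ equal-size facets in the $H_p^{n-2}$ copies, so the funneling can be done level-by-level without concentration. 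Verifying that this recursive funneling telescopes to the claimed bound — and in particular that the geometric series $\sum_k (p/(p-2))^k$ is dominated by its last term $(p/(p-2))^n$ — is the crux of the argument; everything else is bookkeeping with the flow axioms and the congestion normalization.
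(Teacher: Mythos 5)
Your high-level plan is the same as the paper's: split $H_p^n$ into the $p$ copies $H_1,\dots,H_p$, reuse $f$ verbatim for pairs inside a copy, and for cross pairs route each unit to the boundary facet, across the matching of \cref{lem:hanoimatch}, and back out; your estimate of the matching-edge congestion, $O\!\left(p^{2(n-1)}/((p-2)^{n-1}p^n)\right)=O((p/(p-2))^n)$, is also the paper's transmission bound. The genuine gap is that you defer exactly the step that carries the whole proof. You write that verifying the recursive funneling ``is the crux of the argument; everything else is bookkeeping,'' but that crux is the entire content of the paper's Section~\ref{sec:handetails}, and it does not follow from \cref{rmk:facetdecomp} alone. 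The paper handles it by (i) a preliminary \emph{shuffling} step (\cref{lem:pishufcong}) that uses the assumed flow $f$ to spread each source's $|V(H_2)|$ units uniformly over its own copy, so that the subsequent concentration/distribution problems are source-independent MSFs between a uniform distribution on $V(H_i)$ and the facet $\bdryh{H_1}{H_2}$, and (ii) a recursive decomposition of $\pi_{dist}$ (\cref{lem:fdistrecdef}) into distribution subproblems with budget $((p-2)/p)\sigma_{dist}$, transmission subproblems, and facet-to-facet \emph{routing} subproblems with the remaining $(2/p)\sigma_{dist}$, where \cref{lem:fdistroutrec} shows the routing subproblems decompose into same-size facet-to-facet and transmission problems with \emph{unchanged} surplus and demand, hence no recursive gain. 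The induction of \cref{lem:hanpartialcong} then keeps the non-normalized per-edge flow at exactly $\sigma_{dist}$ at every recursion level.

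This matters because your quantitative picture---``a geometric series $\sum_k (p/(p-2))^k$ dominated by its last term''---is not how the bound arises: the danger in the recursion is a \emph{multiplicative} compounding (a factor $c>1$ per level would give an extra $c^{\Theta(n)}$ that cannot be absorbed into $O((p/(p-2))^n)$), and ruling that out is precisely what the exact budget split $((p-2)/p+2/p)\sigma_{dist}=\sigma_{dist}$ and the matching between $H_p^{n-3}$ subgraphs in \cref{lem:fdistroutrec} accomplish; your proposal supplies no mechanism for this. Two smaller corrections: there is no ``essentially unique boundary vertex associated with $s$''---a typical $s\in H_1$ is not adjacent to $\bdryh{H_1}{H_2}=\faceth{12}(H_1)$ at all, since the facet contains only $(p-2)^{n-1}$ of the $p^{n-1}$ vertices, so the flow must genuinely be spread over the whole facet (as you later say); and the intra-copy reuse of $f$ contributes only $\rho/p$ after renormalization, with the remaining $\rho(p-1)/p$ of the $\rho$ budget consumed by the shuffling step, which your outline omits and which is where the hypothesis on $f$ is used for the cross-copy pairs as well.
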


Then Theorem~\ref{thm:hanoiexplb} will immediately follow via induction on $n$ (with a trivial base case).

The construction in this section largely follows the construction used in~\cite{eppfrishtri} to prove \cref{lem:eppfrishfw}. However, applying their construction breaks down in our case due to the small sets of edges between Hanoi subgraphs. We address this with the new machinery in \cref{sec:handetails}. 

The inductive structure comes from dividing $H_p^n$ into $p$ subgraphs as described in Section~\ref{sec:framework}, each of which is isomorphic to $H_p^{n-1}$. We then reuse the fractional paths given by the flow $f$ for pairs of vertices lying in the same $H_p^{n-1}$ copy. The rest of this section is devoted to routing flow between those pairs lying in different $H_p^{n-1}$ copies. 

Let $H_1, \dots, H_p$ be the $p$ copies of $H_p^{n-1}$. Suppose a vertex $s \in H_1$ wants to send an equal amount of flow to all vertices $t \in H_2$. We first find some set of paths along which $s$ sends its flow to the \emph{boundary vertices} in $H_1$\textemdash those having neighbors in $H_2$\textemdash so that the flow is equally distributed among those boundary vertices. We let those boundary vertices simply send the flow across their edges to $H_2$; then the boundary vertices in $H_2$ need to find a way to distribute the flow to the rest of the vertices in $H_2$.

We call this last step \emph{distribution}; we call sending the flow across the boundary the \emph{transmission} step; we will divide the first part\textemdash having $s$ send its flow to the boundary\textemdash into two steps, one of which we call \emph{shuffling} and one of which we call \emph{concentration}. The shuffling step, which we will describe shortly, takes advantage of the inductive structure we described above.

Figure~\ref{fig:trandist} shows the transmission and distribution steps in $H_3^3$. (The distribution step is almost trivial in this case, since the boundary matching has size one.)

\begin{figure}
\label{fig:trandist}
\includegraphics[height=20em]{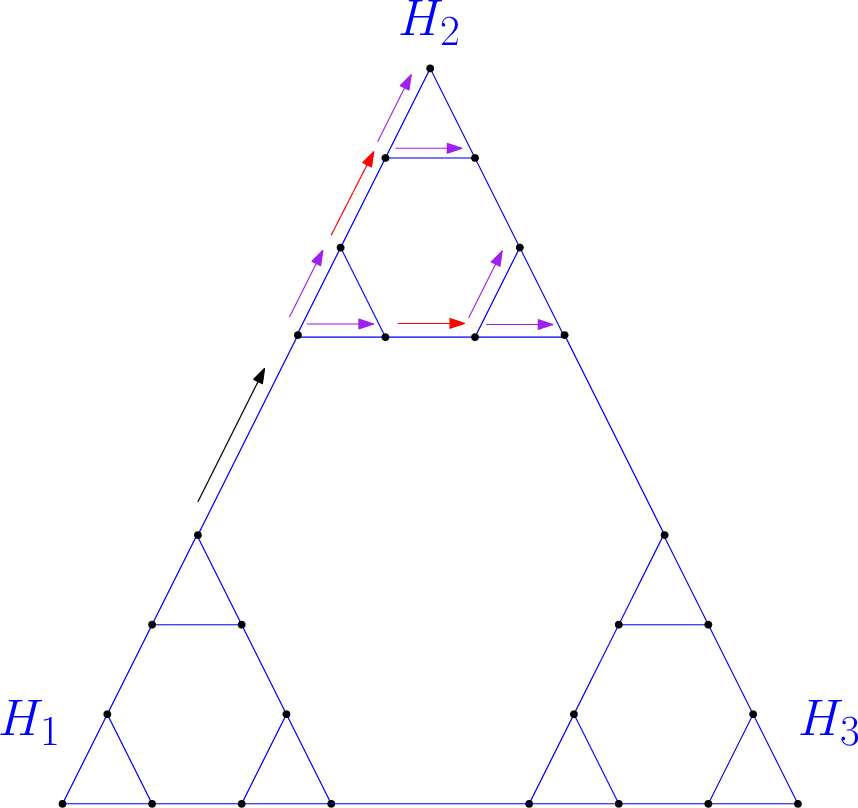}
\caption{The transmission and distribution steps for $H_3^3$ as described in Section~\ref{sec:hanoiexplb}. 
The top vertex in~$H_1$ routes $|V(H_1)||V(H_2)|$ flow to its neighbor in~$H_2$. That neighbor, the bottom-left vertex in $H_2$, then routes $|V(H_1)|\cdot 3$ flow to its topmost neighbor (similarly right neighbor) that is bound for the topmost (similarly bottom-right) 3-clique in $H_2$, and distributes an additional $|V(H_1)|\cdot 1$ flow to each neighbor in its own 3-clique.}
\end{figure}

To formalize the division of this multicommodity flow into ``steps,'' we define the steps as a more general type of flow, as in~\cite{eppfrishtri}:

Given a graph $G = (V, E)$, denote by~$A$ the set of directed arcs obtained by directing the edges of~$E$ in both directions as in the definition of a multicommodity flow in \cref{sec:expmcflow}.
Define a \emph{multi-way single-commodity flow} (MSF)~\citep{eppfrishtri} as a function~$f: A \rightarrow \mathbb{R}_{\geq 0}$ with \emph{source} set $S \subseteq V$, \emph{sink} set $T \subseteq V$, and a set of \emph{surplus} and \emph{demand} amounts $\sigma: S \rightarrow \mathbb{R}$ and $\delta: T \rightarrow \mathbb{R}$, as a flow $f: A \rightarrow \mathbb{R}$ in $G$, such that:
\begin{enumerate}
\item for all $s \in S \setminus T$ we have $\sum_{v \in N(s)} (f(s, v) - f(v, s)) = \sigma(s)$
\item for all $t \in T \setminus S$ we have $\sum_{v \in N(t)} (f(v, t) - f(t, v)) = \delta(t)$ 
\item for all $u \in S \cap T$ we have $\sum_{v \in N(u)} (f(u, v) - f(v, u)) = \sigma(u) - \delta(u)$
\item for all $u \in V \setminus (S \cup T)$ we have $\sum_{v \in N(u)}(f(u, v) - f(v, u)) = 0$
\end{enumerate}

Define by an \emph{MSF problem} a tuple $\pi = (S, T, \sigma, \delta)$,uio where~$S,T,\sigma,\delta$ are as described. Say that an MSF~$f$ \emph{solves} the MSF problem $\pi = (S, T, \sigma, \delta)$ if~$f$ is consistent with $S,T,\sigma,\delta$.

In some cases $\sigma$ or $\delta$ will be a constant function, and in those cases we will abuse notation and write $\sigma$ (resp. $\delta$) for the value $\sigma(s)$ (resp. $\delta(t)$).

It follows from the definition of an MSF that, given MSFs $f_1, f_2$ solving $\pi_1 = (S_1, T_1, \sigma_1, \delta_1)$ and $\pi_2 = (S_2 = T_1, T_2, \sigma_2 = \delta_1, \delta_2)$  one can \emph{compose} the two MSFs $f_1, f_2$\textemdash combine them into a third MSF $f_2 \circ f_1$ that solves what we will call the composition 
\[
\pi_2 \circ \pi_1 = (S_1, T_2, \sigma_1, \delta_2).\]

We can also take the \emph{sum} of~$f_1$ and $f_2$ to be $f_3 = f_1 + f_2$, defined pointwise, so that $f_3$ solves what we will call the sum of $\pi_1$ and $\pi_2$, i.e.
\[\pi_1 + \pi_2 = (S_1 \cup S_2, T_1 \cup T_2, \sigma_3, \delta_3);
\]
for $\sigma_3$ and $\delta_3$ we naturally extend the domain to $S_1 \cup S_2$:
first, extend $\sigma_1, \sigma_2, \delta_1, \delta_2$: for $v \in S_2 \setminus S_1,$ let $\sigma_1(v) = 0$; similarly for $v \in S_1 \setminus S_2,$ let $\sigma_2(v) = 0$. For $v \in T_2 \setminus T_1$ let $\delta_1(v) = 0$; similarly, for $v \in T_1 \setminus T_1$ let $\delta_2(v) = 0$.

Finally for all $v$ let $\sigma_3(v) = \sigma_1(v) + \sigma_2(v)$ and let $\delta_3(v) = \delta_1(v) + \delta_2(v)$.

Now consider any two of the $H_p^{n-1}$ subgraphs, without loss of generality~$H_1$ and~$H_2$. Consider the aggregate problem of letting each vertex $s \in V(H_1)$ send a separate unit to every $t \in V(H_2)$. We will decompose this problem into a collection of MSF subproblems, one for each source vertex~$s$. For each~$s$, call this subproblem

\[
    \pi_s = (\{s\}, V(H_2), \sigma_s = |V(H_2)|, \delta_s = 1).
\]

\begin{remark}
    To send flow from every $s \in V(H_1)$ to every $t \in V(H_2)$, it suffices to define a function $f_s$ for each $s \in V(H_1)$, such that $f_s$ solves $\pi_s$.

    Furthermore, the resulting congestion is equal to the congestion produced by the function $\sum_s f_s$.
\end{remark}

Now we describe and formalize the shuffling step we have previously described\textemdash similar to the shuffling step in~\cite{eppfrishtri}\textemdash as an MSF problem~$\pi_{shuf}$. We also define \emph{concentration}, \emph{transmission}, and \emph{distribution} problems $\pi_{conc}, \pi_{tran},$ and~$\pi_{dist}$.

\begin{lemma}
    \label{lem:sproblems}
    Given a vertex $s \in V(H_1)$, define the following MSF problems:
    \begin{itemize}
        \item $\pi_{shuf} = \left(\{s\}, V(H_1), \sigma_{shuf} = |V(H_2)|, \delta_{shuf} = \frac{|V(H_2)|}{|V(H_1)|} = 1\right),$
        \item $\pi_{conc} = \left(V(H_1), \bdryh{H_1}{H_2}, \sigma_{conc} = 1, \delta_{conc} = \frac{|V(H_1)|}{|\bdryh{H_1}{H_2}|}\right),$
        \item $\pi_{tran} = \left(\bdryh{H_1}{H_2}, \bdryh{H_2}{H_1}, \sigma_{tran} = \delta_{tran} = \frac{|V(H_1)|}{|\bdryh{H_1}{H_2}|} = \frac{|V(H_2)|}{|\bdryh{H_2}{H_1}|}\right)$
        \item $\pi_{dist} = \left(\bdryh{H_2}{H_1}, V(H_2), \sigma_{dist} = \frac{|V(H_2)|}{|\bdryh{H_2}{H_1}|}, \delta_{dist} = 1\right)$
    \end{itemize}
    The composition of these problems is~$\pi_s = \pi_{dist} \circ \pi_{tran} \circ \pi_{conc} \circ \pi_{shuf}$.
\end{lemma}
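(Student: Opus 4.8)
The plan is a direct verification: apply the composition rule for MSF problems three times, working from the inside out, and check at each interface the two conditions the rule requires\textemdash that the sink set of the earlier problem equals the source set of the later one, and that the demand function of the earlier problem equals the surplus function of the later one on that common set. Since composing $(S_1,T_1,\sigma_1,\delta_1)$ with $(T_1,T_2,\delta_1,\delta_2)$ produces $(S_1,T_2,\sigma_1,\delta_2)$, once all three interfaces check out we will be left with the tuple $(\{s\},V(H_2),\sigma_{shuf},\delta_{dist})$, which we then read off as $\pi_s$.

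At the interface between $\pi_{shuf}$ and $\pi_{conc}$, the sink of $\pi_{shuf}$ and the source of $\pi_{conc}$ are both $V(H_1)$, and $\delta_{shuf}$ and $\sigma_{conc}$ are both the constant function $1$ on $V(H_1)$; so both conditions hold immediately, and $\pi_{conc}\circ\pi_{shuf} = (\{s\}, \bdryh{H_1}{H_2}, |V(H_2)|, |V(H_1)|/|\bdryh{H_1}{H_2}|)$. At the next interface the common set is $\bdryh{H_1}{H_2}$, and $\delta_{conc}$ and $\sigma_{tran}$ are both the constant $|V(H_1)|/|\bdryh{H_1}{H_2}|$, so again nothing is required, and the composition becomes $(\{s\}, \bdryh{H_2}{H_1}, |V(H_2)|, |V(H_1)|/|\bdryh{H_1}{H_2}|)$. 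At the last interface the common set is $\bdryh{H_2}{H_1}$, and the amount condition reads $|V(H_1)|/|\bdryh{H_1}{H_2}| = |V(H_2)|/|\bdryh{H_2}{H_1}|$, that is, $\delta_{tran} = \sigma_{dist}$.

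This last equality is the one quantitative input, and it is already implied by earlier facts: $H_1 \cong H_2 \cong H_p^{n-1}$ gives $|V(H_1)| = |V(H_2)|$, and \cref{lem:hanoimatch} gives $|\bdryh{H_1}{H_2}| = |\bdryh{H_2}{H_1}| = (p-2)^{n-1}$ (indeed this equality is what lets the statement of the lemma write $\sigma_{tran} = \delta_{tran}$ as a single value, and likewise write $\sigma_{dist}$ equivalently as $|V(H_1)|/|\bdryh{H_1}{H_2}|$). With all three interfaces verified, three applications of the composition rule yield $(\{s\}, V(H_2), |V(H_2)|, 1) = \pi_s$. I do not expect a real obstacle here: the lemma is essentially the bookkeeping that glues the recursive decomposition together, and its only content beyond unwinding definitions is the equal-cardinality fact from \cref{lem:hanoimatch}. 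The one point needing a little care is that $s \in V(H_1)$, so $s$ lies in both the source and the sink of $\pi_{shuf}$ (and in the source of $\pi_{conc}$); this matters only when one later checks that MSFs solving these problems actually exist, via the clause of the MSF definition governing vertices in $S \cap T$, and it does not affect the tuple-level identity asserted here.
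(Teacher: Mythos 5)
Your proposal is correct and matches the paper's own (very terse) proof, which likewise just compares source/sink sets and the $\sigma,\delta$ functions across the chain of compositions; you simply make explicit the one quantitative input ($|V(H_1)|=|V(H_2)|$ and $|\bdryh{H_1}{H_2}|=|\bdryh{H_2}{H_1}|$ from \cref{lem:hanoimatch}) that the paper leaves implicit.
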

\begin{proof}
    The claim follows from comparing source and sink sets, as well as the~$\sigma$ and~$\delta$ functions, of the sequence of MSF problems ~$\pi_{shuf}$, $\pi_{conc}$, $\pi_{tran}$, and $\pi_{dist}$.
\end{proof}

The idea of the shuffling step~$\pi_{shuf}$ is, given a vertex $s \in V(H_1)$ that needs to send flow to every $t \in V(H_2)$, we let $s$ first send an equal amount (a $1/|V(H_1)|$ factor) of this flow to every $s' \in V(H_1)$, using the flow $f$ that exists in $V(H_1)$ by the inductive hypothesis in Lemma~\ref{lem:hanoiindflow}.

The idea of~$\pi_{conc}$, the \emph{concentration} MSF, is\textemdash after we have uniformly distributed the outbound flow from~$s$ throughout~$V(H_1)$\textemdash to then concentrate this flow on the boundary~$\bdryh{H_1}{H_2}$.

To send flow from~$\bdryh{H_1}{H_2}$ across the edges~$\edgeh{H_1}{H_2}$ between $H_1$ and $H_2$, we need to solve the \emph{transmission} MSF,
$\pi_{tran}.$

Finally, to ensure that all of the flow is distributed evenly throughout~$V(H_2)$ we need to solve the \emph{distribution} step~$\pi_{dist}$.

\cref{lem:sproblems} states that to solve~$\pi_s$ it suffices to solve $\pi_{shuf}, \pi_{conc}, \pi_{tran},$ and $\pi_{dist}$.

We now show how to solve~$\pi_{shuf}$:

\begin{lemma}
\label{lem:pishufcong}
Suppose there exists a multicommodity flow~$f$ in $H_p^{n-1}$ with congestion~$\rho$. Then for each $s \in V(H_1)$ the MSF problem~$\pi_{shuf}$ defined with respect to vertex~$s$ can be solved with an MSF $f_{shuf}$, such that summing over all such MSFs produces congestion at most $\rho\cdot \frac{|V(H_1)|}{|V\left(H_p^n\right)|}$.
\end{lemma}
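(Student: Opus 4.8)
The plan is to lift the given uniform multicommodity flow $f$ on $H_p^{n-1} \cong H_1$ directly to an MSF solving $\pi_{shuf}$. Recall that $\pi_{shuf} = (\{s\}, V(H_1), |V(H_2)|, 1)$: the vertex $s$ must send $|V(H_2)|/|V(H_1)| = 1$ unit to each $t \in V(H_1)$ (including itself, where the demand is subtracted from the surplus). Since $f$ is uniform, it already contains, for each ordered pair $(s, t)$ with $s,t \in V(H_1)$, a unit flow $f_{st}$ from $s$ to $t$. So I would simply set $f_{shuf} = \sum_{t \in V(H_1)} f_{st}$, i.e. superpose the single-commodity flows out of $s$ to every other vertex of $H_1$. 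One checks the MSF axioms: at $s$ the net outflow is $\sum_{t \neq s} 1 = |V(H_1)| - 1$; adding the convention that $s \in S \cap T$ with $\sigma_{shuf} = |V(H_2)| = |V(H_1)|$ and $\delta_{shuf} = 1$, the required net outflow at $s$ is $\sigma - \delta = |V(H_1)| - 1$, matching. At every other $t$ the net inflow is exactly $1 = \delta_{shuf}$, and at no vertex is there spurious conservation violation since each $f_{st}$ is itself a valid $s$-$t$ flow. This establishes that $f_{shuf}$ solves $\pi_{shuf}$.

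Next I would bound the congestion. For a fixed $s$, the flow on any arc $(u,v)$ under $f_{shuf}$ is $\sum_{t \in V(H_1)} f_{st}(u,v)$. Now I sum over all sources $s \in V(H_1)$: the total flow on arc $(u,v)$ from all the $\pi_{shuf}$ subproblems together is $\sum_{s \in V(H_1)} \sum_{t \in V(H_1)} f_{st}(u,v) = \sum_{(s,t) \in V(H_1)\times V(H_1)} f_{st}(u,v)$, which is exactly the total flow that the uniform multicommodity flow $f$ itself places on $(u,v)$. By hypothesis $f$ has congestion $\rho$ in $H_p^{n-1}$, meaning $\frac{1}{|V(H_1)|}\max_{(u,v)} \sum_{(s,t)} f_{st}(u,v) \le \rho$, so $\max_{(u,v)} \sum_{(s,t)} f_{st}(u,v) \le \rho\,|V(H_1)|$. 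Dividing by $|V(H_p^n)|$ (the normalization in the definition of congestion at the level of $H_p^n$) gives congestion at most $\rho\,|V(H_1)|/|V(H_p^n)|$ on every arc lying inside the $H_1$ copy, which is precisely the claimed bound. Arcs not inside $H_1$ carry no $\pi_{shuf}$ flow, so the bound holds globally for the shuffling contribution.

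The only subtlety worth spelling out is the bookkeeping of the normalization constant: the congestion of $f$ is measured against $|V(H_1)| = |V(H_p^{n-1})|$, but the congestion of the flow we are building in $H_p^n$ is measured against $|V(H_p^n)| = p\,|V(H_1)|$. Because we run one copy of the $\pi_{shuf}$ construction inside each of the $p$ subgraphs $H_i$ (by symmetry the argument above applies verbatim with $H_1$ replaced by any $H_i$, and these subgraphs are edge-disjoint so their shuffling flows do not share arcs), the worst-case arc load is still $\rho\,|V(H_1)|$, and dividing by $|V(H_p^n)|$ yields the factor $\rho\,|V(H_1)|/|V(H_p^n)| = \rho/p$. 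I do not expect any real obstacle here — this lemma is essentially a direct reuse of the inductive flow, and the main work in the paper lies in the later steps $\pi_{conc}$, $\pi_{tran}$, and $\pi_{dist}$, where the small boundary matchings force the new machinery. The one place to be careful is making sure the MSF axioms are verified with the correct treatment of the vertex $s$, which lies in both the source and sink sets.
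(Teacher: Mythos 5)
Your proposal is correct and follows essentially the same route as the paper: reuse the uniform multicommodity flow $f$ inside $H_1$ to realize all the shuffling MSFs at once, and obtain the bound purely from the change of normalization from $1/|V(H_1)|$ to $1/|V(H_p^n)|$. Your explicit verification of the MSF axioms at $s$ (with $s \in S \cap T$) is just a more detailed write-up of what the paper leaves implicit.
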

\begin{proof}
To solve all of the $\pi_{shuf}$ problems (one for each $s \in V(H_1)$, every $s \in V(H_1)$ must send a single unit ($|V(H_2)|/|V(H_1)| = 1$) of flow to every $s' \in V(H_1)$. In other words, this is a \emph{multicommodity} flow in $H_1$. By the assumption that~$f$ exists in $H_p^{n-1}$ with congestion~$\rho$ and the fact that $H_1 \cong H_p^{n-1}$, this is accomplished with congestion
\[\rho\cdot \frac{|V(H_1)|}{|V(H_p^n)|}\]
by $f$: here we are passing from a flow defined over $H_1$ to a flow defined over $H_p^n$; since the congestion is defined to be normalized by the number of vertices in the graph $H_p^n$ (\cref{sec:expmcflow}), we have passed from normalizing by $\frac{1}{|V(H_1)|}$ to normalizing by $\frac{1}{|V(H_p^n)|}$, giving the scaling factor $\frac{|V(H_1)|}{|V(H_p^n)|}$.

Let $f_{shuf}$ be identical to this function $f$ over $H_1$, and zero everywhere else.
\end{proof}

Next we solve $\pi_{tran}$:
\begin{lemma}
    The MSF problem $\pi_{tran}$ for each vertex $s\in V(H_1)$ can be solved, such that the total congestion for all such flows is at most $\frac{|V(H_1)||V(H_2)|}{|\edgeh{H_1}{H_2}||V(H_p^n)|}$.
\end{lemma}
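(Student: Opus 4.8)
The plan is to read the transmission flow straight off the matching structure of the boundary. By \cref{lem:hanoimatch}, the boundary edge set $\edgeh{H_1}{H_2}$ is a perfect matching of size $(p-2)^{n-1}$ pairing each vertex of $\bdryh{H_1}{H_2}$ with a unique vertex of $\bdryh{H_2}{H_1}$, and the two boundary sets themselves also have size $(p-2)^{n-1}$. Write $m\colon \bdryh{H_1}{H_2}\to\bdryh{H_2}{H_1}$ for this bijection. The natural (and congestion-minimal) way to solve $\pi_{tran}$ is to have each boundary vertex push all of its surplus straight across its matching edge: for every source $s\in V(H_1)$ I would define the MSF $f_{tran}^{(s)}$ that places weight $\sigma_{tran}=|V(H_1)|/|\bdryh{H_1}{H_2}|$ on each arc $(u,m(u))$ with $u\in\bdryh{H_1}{H_2}$, and weight $0$ on every other arc.

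Next I would check that this $f_{tran}^{(s)}$ actually solves $\pi_{tran}$. Since $H_1$ and $H_2$ are vertex-disjoint, the source and sink sets of $\pi_{tran}$ are disjoint, so the MSF conditions reduce to: each $u\in\bdryh{H_1}{H_2}$ has net out-flow exactly $\sigma_{tran}$, and each $v\in\bdryh{H_2}{H_1}$ has net in-flow exactly $\sigma_{tran}$. Both are immediate from the definition of $f_{tran}^{(s)}$, and using $|\bdryh{H_1}{H_2}|=|\bdryh{H_2}{H_1}|$ and $|V(H_1)|=|V(H_2)|$ (again \cref{lem:hanoimatch}) this common value equals the prescribed $\sigma_{tran}=\delta_{tran}$ from \cref{lem:sproblems}, so $f_{tran}^{(s)}$ is consistent with $\pi_{tran}$.

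It then remains to bound the congestion of $\sum_{s\in V(H_1)}f_{tran}^{(s)}$ over all $|V(H_1)|$ sources. Each matching arc $(u,m(u))$ carries total weight $|V(H_1)|\cdot\sigma_{tran}=|V(H_1)|^2/|\bdryh{H_1}{H_2}|$, and every non-matching arc carries none. Rewriting $|\bdryh{H_1}{H_2}|=|\edgeh{H_1}{H_2}|$ and one factor $|V(H_1)|=|V(H_2)|$, the maximum arc load is $|V(H_1)||V(H_2)|/|\edgeh{H_1}{H_2}|$; dividing by $|V(H_p^n)|$ as in the definition of congestion gives exactly the claimed bound (in fact with equality, since every source is forced to push the same surplus across the same matching edges).

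I do not expect a genuine obstacle in this lemma: once \cref{lem:hanoimatch} is in hand the transmission flow is essentially forced and the argument is pure bookkeeping. The only points that need care are keeping the normalizing factor $1/|V(H_p^n)|$ straight — the flow is supported on boundary arcs of $H_p^n$ even though it is organized around the sub-pair $(H_1,H_2)$ — and noting that, unlike the cheap shuffling step, this term is heavy: after normalization its congestion is of order $(p/(p-2))^n$, so it consumes a constant fraction of the congestion increment allowed in \cref{lem:hanoiindflow}. The real difficulty of the whole construction therefore sits not here but in the concentration and distribution steps, which must be engineered so that their loads stay within the same budget.
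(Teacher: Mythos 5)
Your proposal is correct and is essentially the paper's own argument: send $\sigma_{tran}$ across each edge of the boundary matching for every source $s\in V(H_1)$, then sum over the $|V(H_1)|$ sources and normalize by $|V(H_p^n)|$. The extra verification that the matching flow meets the MSF surplus/demand conditions, and the observation that this term dominates at order $(p/(p-2))^n$, are consistent with (and implicit in) the paper's treatment.
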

\begin{proof}
Let $f_{tran}$ simply send an equal amount $\sigma_{tran} = \delta_{tran}$ of flow across every boundary edge in the matching $\edgeh{H_1}{H_2}$. Summing over all $|V(H_1)|$ such flows and normalizing by $|V(H_p^n)|$ gives congestion
\[
    \frac{|V(H_1)|\sigma_{tran}}{|V(H_p^n)|} = \frac{|V(H_1)|\delta_{tran}}{|V(H_p^n)|} = \frac{|V(H_1)||V(H_2)|}{|\edgeh{H_1}{H_2}||V(H_p^n)|}.
\]
\end{proof}

The hard part is to solve $\pi_{conc}$ and $\pi_{dist}$. Fortunately, however, these two MSFs are ``mirror images'' of one another: swapping the identities of $H_1$ and $H_2$, of sink and source vertices, and of $\sigma$ and $\delta$ functions is sufficient to transform one into the other. By this symmetry it suffices to solve $\pi_{dist}$ and bound its congestion.

We do so in Section~\ref{sec:handetails}.

\section{The distribution step}
\label{sec:handetails}
In this section we introduce new machinery that allows us to repair the framework in~\cite{eppfrishtri}. As we stated in \cref{sec:hanoiexplb}, that framework breaks down for Hanoi graphs. In particular, Condition 3 of \cref{lem:eppfrishfw} fails, as the number of edges between a pair of Hanoi subgraphs is much smaller than needed for that condition.

Condition 3 requires that the number of edges between a pair of subgraphs be at least equal to the product of the cardinalities of the vertex sets of the subgraphs, divided by the total number of edges in the graph. This would give congestion at most one in the $f_{tran}$ flow defined in \cref{sec:hanoiexplb}.

However, since Condition 3 fails for Hanoi graphs, the $f_{tran}$ flow has congestion (much) greater than one, causing an exponential blowup in the recursive construction that is needed to solve the $\pi_{conc}$ and $\pi_{dist}$ subproblems.

We address this problem by more carefully specifying the flow $f_{dist}$ that solves the $\pi_{dist}$ subproblem. To do so, we use the facets we defined in \cref{sec:prelim}.

Recall that we defined 
\[\pi_{dist} = \left(\bdryh{H_2}{H_1}, V(H_2), \sigma_{dist} = \frac{|V(H_2)|}{|\edgeh{H_1}{H_2}|}, \delta_{dist} = 1\right)\]
as the problem of distributing $|V(H_2)|$ units of flow, sent from some vertex $s \in H_1$, equally throughout $H_2$. 
Our goal is to construct $f_{dist}$ so that it produces congestion at most $\sigma_{dist}$. This will imply that the sum of $|V(H_1)|$ such flows\textemdash one for each $s \in V(H_1)$\textemdash produces at most 
\[|V(H_1)|\cdot\sigma_{dist} = |V(H_1)| \cdot \frac{|V(H_2)|}{|\edgeh{H_1}{H_2}|} = (p/(p-2))^{n-2} \cdot |V(H_p^n)|\] units of flow across edges in $H_2$, yielding $(p/(p-2))^{n-2}$ (normalized) congestion in the inductive step in Lemma~\ref{lem:hanoiindflow}, as desired.

Define $\{H_{1,i} \mid 1 \leq i \leq p\}$ as the $p$ Hanoi subgraphs of~$H_1$. Define~$\{H_{2,i}\}$ similarly. 
We construct~$f_{dist}$ recursively by decomposing~$\pi_{dist}$ into MSF subproblems:
\begin{lemma}
\label{lem:fdistrecdef}
Define the following MSF problems:
\begin{itemize}
    \item For $3 \leq i \leq p$ define the ``distribution'' subproblem \begin{align*}
        \pi_{distsub,i} = \left(\bdryh{H_2}{H_1} \cap V(H_{2,i}), V(H_{2,i}), \sigma_{distsub,i} = ((p-2)/p)\sigma_{dist}, \delta_{distsub,i} = 1\right)
    \end{align*}

    \item Define the ``distribution'' subproblems
    \begin{align*}
        \pi_{distsub,1} &= \left( \bigcup_{3 \leq i\leq p}\bdryh{H_{2,1}}{H_{2,i}}, V(H_{2,1}), \sigma_{distsub,1} = (1/p)\sigma_{dist}, \delta_{distsub,1} = 1\right) \\
        \pi_{distsub,2} &= \left(\bigcup_{3 \leq i\leq p}\bdryh{H_{2,2}}{H_{2,i}}, V(H_{2,2}), \sigma_{distsub,2} = (1/p)\sigma_{dist}, \delta_{distsub,2} = 1\right)
    \end{align*}

    \item For $3 \leq i \leq p$ define the ``transmission'' subproblems
    \begin{align*}
        \pi_{tran,i,1} = (\bdryh{H_{2,i}}{H_{2,1}}, \bdryh{H_{2,1}}{H_{2,i}}, \sigma_{tran,i,1} = \delta_{tran,i,1} = (1/p)\sigma_{dist}) \\
        \pi_{tran,i,2} = (\bdryh{H_{2,i}}{H_{2,2}}, \bdryh{H_{2,2}}{H_{2,i}}, \sigma_{tran,i,2} = \delta_{tran,i,2} = (1/p)\sigma_{dist})
    \end{align*}
    
    \item Define the ``routing'' subproblems
    \begin{align*}
        \pi_{rout,i,1} = (\bdryh{H_2}{H_{1}} \cap V(H_{2,i}), \bdryh{H_{2,i}}{H_{2,1}}, \sigma_{rout,i,1} = \delta_{rout,i,1} = \sigma_{tran,i,1}) \\
        \pi_{rout,i,2} = (\bdryh{H_2}{H_{1}} \cap V(H_{2,i}), \bdryh{H_{2,i}}{H_{2,2}}, \sigma_{rout,i,2} = \delta_{rout,i,2} = \sigma_{tran,i,2})
    \end{align*}

\end{itemize}
Any collection of flows that solves these problems solves~$\pi_{dist}$. More precisely,
\begin{align*}
    \pi_{dist} &= \sum_{3\leq i \leq p} \pi_{distsub,i} \\
    &\quad + \pi_{distsub,1} \circ \sum_{3\leq i \leq p} \pi_{tran,i,1} \circ \pi_{rout,i,1} \\
    &\quad + \pi_{distsub,2} \circ \sum_{3\leq i \leq p} \pi_{tran,i,2} \circ \pi_{rout,i,2}
\end{align*}
\end{lemma}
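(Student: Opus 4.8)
The plan is to prove this by a direct bookkeeping verification. The displayed equation is an identity in the algebra of MSF problems under the composition ($\circ$) and sum ($+$) operations defined in \cref{sec:hanoiexplb}: each operation acts on the four data $(S,T,\sigma,\delta)$ of an MSF problem by an explicit rule (composition glues $T_1 = S_2$, provided $\delta_1 = \sigma_2$, and keeps $(S_1, T_2, \sigma_1, \delta_2)$; sum takes unions of the $S$'s and $T$'s and adds the $\sigma$'s and $\delta$'s pointwise). So it suffices to carry the four data of each primitive problem through the right-hand side and check the result equals $\pi_{dist} = (\bdryh{H_2}{H_1}, V(H_2), \sigma_{dist}, 1)$. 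The structural inputs that make the sets line up are \cref{lem:facetbdry} (each boundary set $\bdryh{H_a}{H_b}$ is a facet) and \cref{rmk:facetdecomp} (a facet of an $H_p^{n-1}$ copy is the disjoint union of the equal-size facets it induces in the $p-2$ relevant $H_p^{n-2}$ subgraphs).

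First I would verify the two ``routing chains.'' For $3 \le i \le p$, the composition $\pi_{tran,i,1} \circ \pi_{rout,i,1}$ is legal because the sink $\bdryh{H_{2,i}}{H_{2,1}}$ of $\pi_{rout,i,1}$ is the source of $\pi_{tran,i,1}$ and $\delta_{rout,i,1} = (1/p)\sigma_{dist} = \sigma_{tran,i,1}$; its value is $(\bdryh{H_2}{H_1} \cap V(H_{2,i}),\ \bdryh{H_{2,1}}{H_{2,i}},\ (1/p)\sigma_{dist},\ (1/p)\sigma_{dist})$. Summing over $i$: on the source side, \cref{lem:facetbdry} identifies $\bdryh{H_2}{H_1} \cap V(H_{2,i})$ with the facet $\faceth{12}(H_{2,i})$, and \cref{rmk:facetdecomp} says these are disjoint with union $\faceth{12}(H_2) = \bdryh{H_2}{H_1}$, so the summed surplus is $(1/p)\sigma_{dist}$ at each source vertex; on the sink side the union is $\bigcup_i \bdryh{H_{2,1}}{H_{2,i}}$, which is exactly the source of $\pi_{distsub,1}$. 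Hence $\pi_{distsub,1} \circ \sum_i (\pi_{tran,i,1} \circ \pi_{rout,i,1})$ is a legal composition equal to $(\bdryh{H_2}{H_1},\ V(H_{2,1}),\ (1/p)\sigma_{dist},\ 1)$, and symmetrically the ``2'' chain equals $(\bdryh{H_2}{H_1},\ V(H_{2,2}),\ (1/p)\sigma_{dist},\ 1)$.

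Next I would compute $\sum_{3 \le i \le p}\pi_{distsub,i}$: by the same two lemmas its source is the disjoint union $\bdryh{H_2}{H_1}$ with surplus $((p-2)/p)\sigma_{dist}$ at each vertex, and its sink is $\bigcup_{3 \le i \le p} V(H_{2,i})$ with demand $1$. Finally, summing all three terms, all three share the source $\bdryh{H_2}{H_1}$, so the total surplus at each source vertex is $\big(\tfrac{p-2}{p} + \tfrac1p + \tfrac1p\big)\sigma_{dist} = \sigma_{dist}$; and the three sink sets $\bigcup_{3 \le i \le p}V(H_{2,i})$, $V(H_{2,1})$, $V(H_{2,2})$ partition $V(H_2)$, so every vertex of $V(H_2)$ receives demand exactly $1$. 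This is $\pi_{dist}$, proving the identity; the first sentence of the lemma then follows because composition and summation of the corresponding \emph{flows} realize composition and summation of the problems.

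The only delicate points are purely set-theoretic: one must check that every intermediate source/sink set is the facet named by \cref{lem:facetbdry}, and that the uniform surplus values $(1/p)\sigma_{dist}$ and $((p-2)/p)\sigma_{dist}$ genuinely sum to $\sigma_{dist}$ because \cref{rmk:facetdecomp} gives a disjoint decomposition of $\bdryh{H_2}{H_1}$ across the subgraphs $H_{2,i}$. (A minor caveat: for $p \ge 4$ the sets $\bdryh{H_{2,1}}{H_{2,i}}$, $i = 3,\dots,p$, forming the source of $\pi_{distsub,1}$ need not be pairwise disjoint; a vertex lying in several of them simply carries a proportionally larger surplus there, which changes only a constant factor and does not affect the congestion accounting in the rest of the argument.) Everything else is substitution into the composition and sum rules.
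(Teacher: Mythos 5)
Your proof is correct and takes essentially the same route as the paper's: a direct bookkeeping comparison of source/sink sets and surplus/demand values, using \cref{lem:facetbdry} to identify each boundary with a facet and \cref{rmk:facetdecomp} to get the disjoint, equal-size decomposition of $\bdryh{H_2}{H_1}$ across $H_{2,3},\dots,H_{2,p}$, so that $\bigl(\tfrac{p-2}{p}+\tfrac1p+\tfrac1p\bigr)\sigma_{dist}=\sigma_{dist}$ and the sinks partition $V(H_2)$. Your caveat about the sets $\bdryh{H_{2,1}}{H_{2,i}}$, $3\leq i\leq p$, overlapping when $p\geq 4$ is a genuine subtlety that the paper's proof passes over silently; it is resolved exactly as you suggest, by reading $\pi_{distsub,1}$ (and $\pi_{distsub,2}$) as the sum of $p-2$ facet-uniform subproblems as in \cref{rmk:fdistsubrecj}, with surpluses adding at overlap vertices, which does not affect the congestion accounting.
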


\begin{figure}
    \centering
    \includegraphics[width=0.4\linewidth]{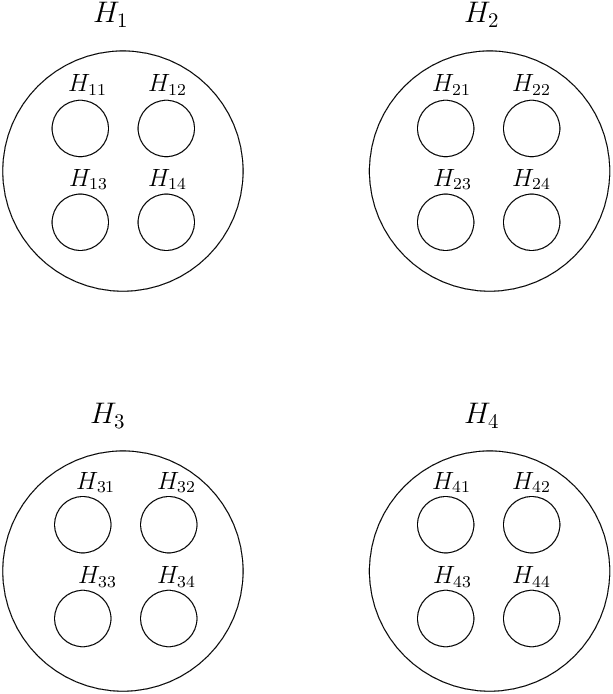}
    \caption{A schematic illustration showing the decomposition of the Hanoi subgraphs into subgraphs. $H_{ij}$ is the subgraph induced by the set of configurations in which the largest disc is placed on peg~$i$ and the second-largest disc is placed on peg~$j$.}
    \label{fig:decomp}
\end{figure}

\begin{figure}
    \centering
    \includegraphics[width=0.45\linewidth]{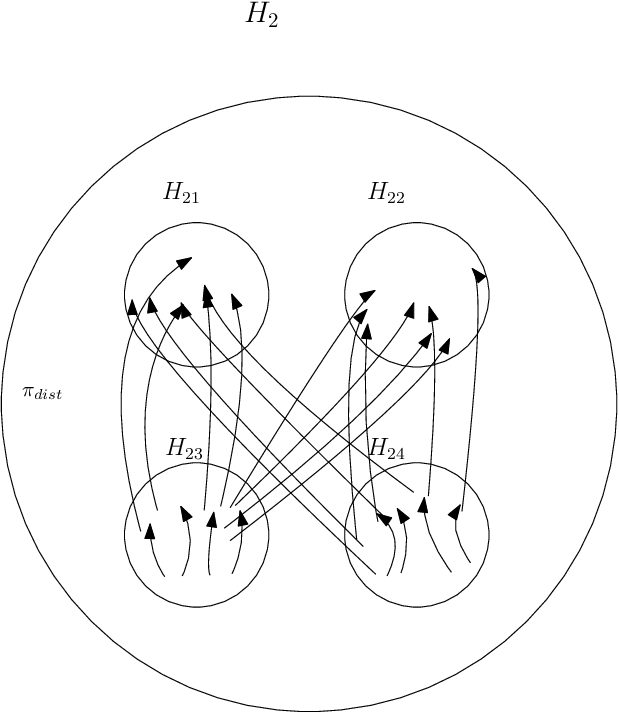}
    \hspace{0.4em}
    \includegraphics[width=0.45\linewidth]{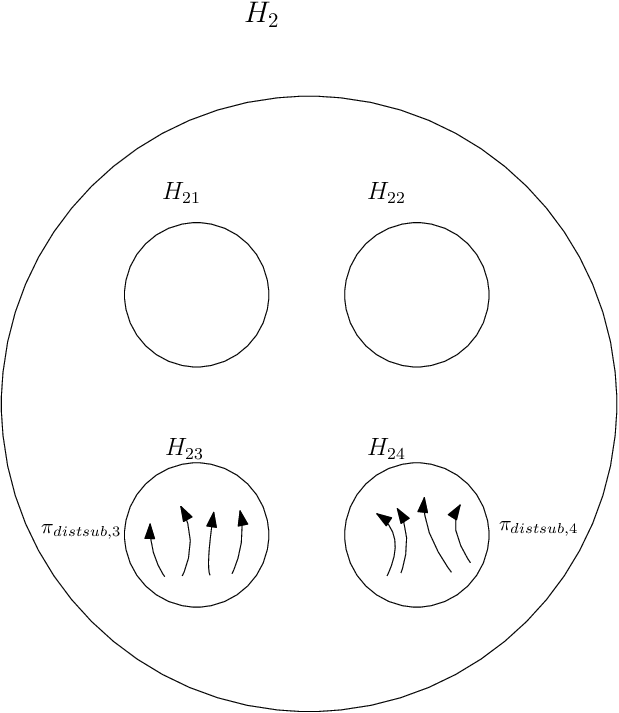}
    
    \vspace{2em}
    \includegraphics[width=0.45\linewidth]{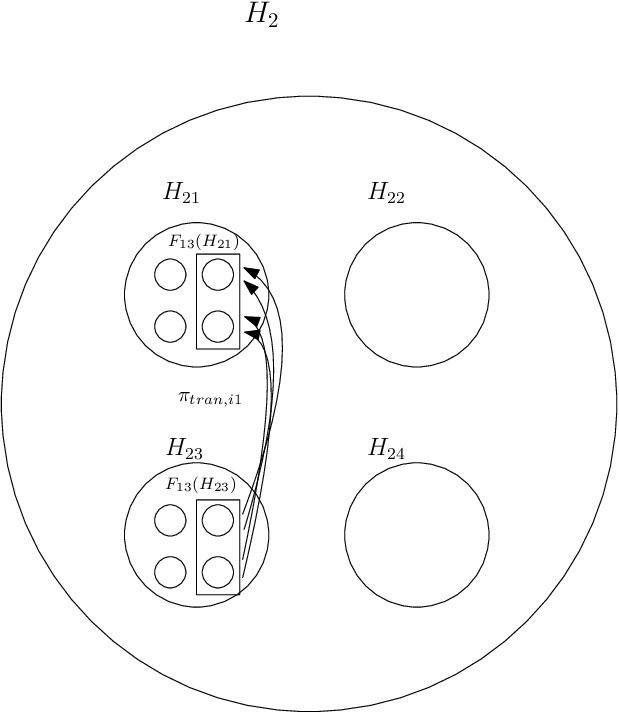}

    \caption{Top left: The problem~$\pi_{dist}$ is the problem of distributing flow received from $H_1$ throughout~$H_2$. Since $\bdryh{H_2}{H_1} = \faceth{12}(H_2) = \faceth{12}(H_{23}) \cup \faceth{12}(H_{24})$, this flow is initially concentrated in the facets $\faceth{12}(H_{23})$ and $\faceth{12}(H_{24})$ and must be distributed throughout~$H_{2}$.\\
    Top right: The problems~$\pi_{distsub,3}$ and $\pi_{distsub,4}$ of distributing flow received from $H_1$ throughout $H_{23}$ and $H_{24}$. This flow is initially concentrated within $\faceth{12}(H_{23})$ and $\faceth{12}(H_{24})$.\\
    Bottom: $\pi_{tran,31}$ is a subproblem of~$\pi_{distsub,3}$ and involves sending the $H_{23}\rightarrow H_{21}$ flow (which originates at~$H_1$) across the boundary matching~$\edgeh{H_{23}}{H_{21}}$.}
    \label{fig:pidistrec}
\end{figure}

\begin{figure}
    \centering
    \includegraphics[width=0.4\linewidth]{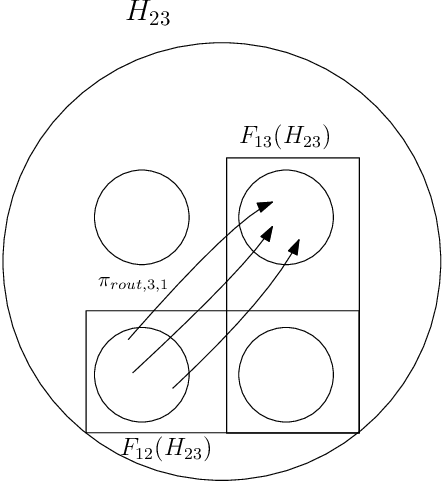}
    \includegraphics[width=0.4\linewidth]{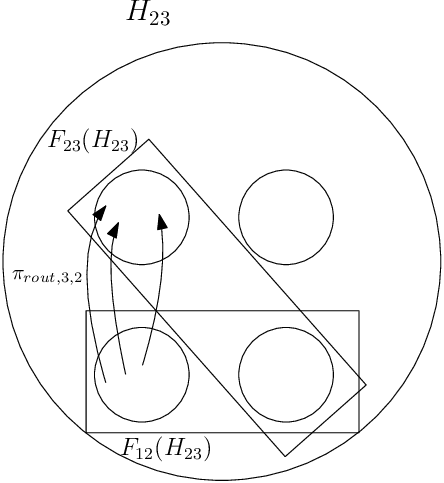}
    \caption{An illustration of the ``routing'' problems $\pi_{rout,i,1}, \pi_{rout,i,2}$ in \cref{lem:fdistrecdef}.}
    \label{fig:pirout}
\end{figure}

We briefly describe each of the problems defined in \cref{lem:fdistrecdef}. (Figure~\ref{fig:decomp}, Figure~\ref{fig:pidistrec}, and Figure~\ref{fig:pirout} visualize these problems for the case $p = 4$.) Our task in constructing $f_{dist}$ is to distribute flow from the boundary set~$\bdryh{H_2}{H_1}$ in~$H_2$ throughout the rest of $H_2$. Consider that
\[\bdryh{H_2}{H_1}~=~\faceth{12}(H_2)~=~\bigcup_{i=3}^p\faceth{12}(H_{2i}) = \bigcup_{i=3}^p\left(\bdryh{H_2}{H_1} \cap V(H_{2,i})\right)\] is both itself a facet, and also a union of facets within Hanoi subgraphs of $H_2$. Thus~$\pi_{distsub,i}$ is the problem of distributing the flow received from $s \in H_1$ evenly throughout the~$i$th subgraph $V(H_{2,i})$.

For $3 \leq i \leq p$, $H_{2,i} \cap \bdryh{H_2}{H_1} = \faceth{12}(H_{2,i}) \neq \emptyset$; therefore $H_{2,i}$ receives flow from $H_1$ directly. This flow must be distributed evenly throughout $H_{2,i}$; this is accomplished by solving $\pi_{distsub,i}$.

However, $V(H_{2,1})$ and $V(H_{2,2})$ do not contain boundary vertices, i.e. $V(H_{2,1}) \cap \bdryh{H_2}{H_1} = \emptyset$ and $V(H_{2,2}) \cap \bdryh{H_2}{H_1} = \emptyset$. Therefore, we must route flow from the other $H_{2,i}$ ($3 \leq i \leq p$) subgraphs to $V(H_{2,1})$ and $V(H_{2,2})$ before solving $\pi_{distsub,1}$ and $\pi_{distsub,2}$. The problem of doing so is to solve $\pi_{tran,i,1}$ and $\pi_{rout,i,1}$ ($1 \leq 3 \leq p$).

More precisely,~$\pi_{tran,ij}$ is the problem of transmitting enough flow across the boundary~$\edgeh{H_{2i}}{H_{2j}}$ to ensure that all of the~$p$ subgraphs of $H_2$ end up with the same total amount of flow. This is solved by defining a flow $f_{tran,ij}$ that simply sends the flow across each edge in the boundary matching $\edgeh{H_{2i}}{H_{2j}}$.

The problems~$\pi_{rout,i,1}$ and~$\pi_{rout,i,2}$ are induced by~$f_{tran,ij}, j \in \{1,2\}$: $f_{tran,ij}$ transmits flow evenly \emph{among} $H_p^{n-2}$ copies in $H_2$, and~$f_{distsub,i}$ (once we construct it) will distribute flow evenly \emph{within} each subgraph. The problem~$\pi_{rout,i,1}$ (respectively~$\pi_{rout,i,2}$) is that of routing, through~$H_{2,i}$, for~$3 \leq i \leq p$, the flow from~$H_1$ that is bound for~$H_{2,1}$ (respectively~$H_{2,2}$).

We now prove \cref{lem:fdistrecdef}:

\begin{proof}[(Proof of \cref{lem:fdistrecdef})]
   The claim will follow from comparing source and sink sets and~$\sigma, \delta$ functions, using the following observations:
   
   The equality $\sigma_{distsub,i} = ((p-2)/p)\sigma_{dist}$, $3 \leq i \leq p$, comes from observing that $p-2$ of the $p$ $H_p^{n-2}$ subgraphs of $H_2$ receive flow from $H_1$, and thus vertices in $H_i$, $3 \leq i \leq p$, will, in order to solve $\pi_{dist}$, distribute a $1/p$ factor of the flow they have received to each of $H_1$ and $H_2$, and distribute the remaining $(p-2)/p$ factor internally.

    Since $\bdryh{H_2}{H_1} = \bigcup_{3 \leq i \leq p} \faceth{12}(H_{2,i})$, we have by Remark~\ref{rmk:facetdecomp} that the flow received from~$H_1$ by $\bdryh{H_2}{H_1}$ is equally distributed \emph{among} (though certainly not \emph{within}) the $p - 2$ subgraphs $H_{2,3}, \dots, H_{2,p}$. Thus it suffices to let $H_{2,3}, \dots, H_{2,p}$ each distribute a $1/p$ factor of this flow to each of~$H_{2,1}$ and~$H_{2,2}$, justifying $\pi_{tran,i,1}$ and $\pi_{tran,i,2}$.

    For the ``routing'' problems $\pi_{rout,i,1}$ and $\pi_{rout,i,2}$, within each~$H_{2,i}$, the flow from~$H_1$ is initially concentrated within~$\faceth{12}(H_{2,i})$ (since, again, this facet forms the restriction of the $H_1, H_2$ boundary~$\bdryh{H_2}{H_1}$ to~$H_{2,i}$). The flow needs to be sent to~$\bdryh{H_{2,i}}{H_{2,1}} = \faceth{1,i}(H_{2,i})$ and~$\bdryh{H_{2,i}}{H_{2,2}} = \faceth{2,i}(H_{2,i})$\textemdash so that~$f_{tran,i,1}$ and~$f_{tran,i,2}$ can send the flow to~$H_{2,1}$ and~$H_{2,2}$.

    Furthermore, $|\faceth{12}(H_{2,i})| = |\faceth{1,i}(H_{2,i})| = |\faceth{2,i}(H_{2,i})|$ by \cref{rmk:facetdecomp}, so $\delta_{rout,i,1} = \sigma_{rout,i,1}$ and $\delta_{rout,i,2} = \sigma_{rout,i,2}$. 

    This justifies the source, sink, surplus, and demand functions in~$\pi_{rout,i,1}$ and~$\pi_{rout,i,2}$.

    Finally,~$H_{21}$ (respectively~$H_{22}$) receives~$(1/p)\sigma_{dist}$, at each vertex in~$\bdryh{H_{2,1}}{H_{2,i}}$ (respectively $\bdryh{H_{2,2}}{H_{2,i}}$), for $3 \leq i \leq p$; this flow must be distributed to the rest of~$V(H_{2,1})$ (respectively~$V(H_{2,2})$, justifying the definitions of~$\pi_{distsub,1}$ and~$\pi_{distsub,2}$.
\end{proof}

\begin{remark}\label{rmk:fdistsubrec}
Each subproblem~$\pi_{distsub,i}, 3 \leq i \leq p,$ is a problem of the same form as~$\pi_{dist}$, over a Hanoi graph isomorphic to $H_p^{n-2}$ instead of $H_p^{n-1}$. That is, $\pi_{distsub,i}$ is the problem of distributing flow, initially concentrated uniformly within a facet of a Hanoi subgraph, throughout the Hanoi subgraph.
\end{remark}

\begin{remark}\label{rmk:fdistsubrecj}
Each of~$\pi_{distsub,1}$ and~$\pi_{distsub,2}$ is the sum of~$p-2$ recursive subproblems of the same form as~$\pi_{dist}$.
\end{remark}

We now have a complete recursive decomposition of~$\pi_{dist}$\textemdash originally defined within~$H_2 \cong H_p^{n-1}$\textemdash into subproblems on its~$H_p^{n-2}$ subgraphs.

(The base case occurs when $n = 2$, and $H_p^{n-1}$ is a clique.)

This decomposition, pending the definition of $f_{tran,ij}$, yields a recursive definition of~$f_{dist}$.

We now bound the resulting congestion:

\begin{lemma}\label{lem:ftranrec}
The subproblem~$\pi_{tran,i,j}$ can be solved with congestion~$\frac{\sigma_{dist}}{p|V(H_p^n)|}$.
\end{lemma}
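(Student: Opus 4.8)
The plan is to solve $\pi_{tran,i,j}$ exactly as the top-level flow $f_{tran}$ was built in \cref{sec:hanoiexplb}, but one recursive level deeper: push all of the flow straight across the boundary matching between $H_{2,i}$ and $H_{2,j}$, using precisely one edge per source vertex. The first step is to apply \cref{lem:hanoimatch} to the decomposition of $H_2 \cong H_p^{n-1}$ into its $p$ Hanoi subgraphs $H_{2,1},\dots,H_{2,p}$ (each isomorphic to $H_p^{n-2}$): for the relevant pairs ($3\le i\le p$, $j\in\{1,2\}$), the edge set $\edgeh{H_{2,i}}{H_{2,j}}$ is a matching of size $(p-2)^{n-2}$, and $|\bdryh{H_{2,i}}{H_{2,j}}| = |\bdryh{H_{2,j}}{H_{2,i}}| = |\edgeh{H_{2,i}}{H_{2,j}}|$. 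Hence this matching is a \emph{perfect} matching between $\bdryh{H_{2,i}}{H_{2,j}}$ and $\bdryh{H_{2,j}}{H_{2,i}}$, and by \cref{lem:facetbdry} these two sets are precisely the source and sink sets of $\pi_{tran,i,j}$ as defined in \cref{lem:fdistrecdef}.

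Next I would define $f_{tran,i,j}$ to be the MSF that sends, from each source vertex $u \in \bdryh{H_{2,i}}{H_{2,j}}$, its entire surplus $\sigma_{tran,i,j} = (1/p)\sigma_{dist}$ along $u$'s unique matching edge to its matched partner in $\bdryh{H_{2,j}}{H_{2,i}}$, and assigns flow $0$ to every other arc. Checking that $f_{tran,i,j}$ solves $\pi_{tran,i,j}$ is then immediate from the MSF axioms: the source and sink sets are disjoint, each source has net outflow $(1/p)\sigma_{dist} = \sigma_{tran,i,j}$, each sink has net inflow $(1/p)\sigma_{dist} = \delta_{tran,i,j}$ (using $\sigma_{tran,i,j} = \delta_{tran,i,j}$ together with perfectness of the matching, so the constant supply pushed from the sources meets the constant demand at the sinks exactly), and every vertex outside the two boundary sets carries no flow, so conservation holds trivially because each flow path is a single edge.

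It then remains only to read off the congestion. The sole arcs carrying positive flow are the matching edges of $\edgeh{H_{2,i}}{H_{2,j}}$, and each carries exactly $(1/p)\sigma_{dist}$; dividing by the normalizing factor $|V(H_p^n)|$ in the definition of congestion (\cref{sec:expmcflow}) gives $\frac{(1/p)\sigma_{dist}}{|V(H_p^n)|} = \frac{\sigma_{dist}}{p|V(H_p^n)|}$, as claimed. I do not expect a genuine obstacle here; the one point that needs care is the appeal to \cref{lem:hanoimatch} and \cref{lem:facetbdry} to confirm that $\bdryh{H_{2,i}}{H_{2,j}}$ and $\bdryh{H_{2,j}}{H_{2,i}}$ are in bijection through the matching, so that the single-edge-per-source flow is actually feasible for $\pi_{tran,i,j}$ and no rerouting within $H_{2,i}$ or $H_{2,j}$ is needed.
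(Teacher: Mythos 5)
Your proposal is correct and follows essentially the same route as the paper: define $f_{tran,i,j}$ to push the constant surplus $\sigma_{tran,i,j}=\sigma_{dist}/p$ directly across each edge of the boundary matching $\edgeh{H_{2,i}}{H_{2,j}}$, so that each edge carries exactly $\sigma_{dist}/p$ flow and normalizing by $|V(H_p^n)|$ gives the stated congestion. The extra detail you supply (invoking \cref{lem:hanoimatch} and \cref{lem:facetbdry} to confirm the matching is a bijection between the source and sink boundary sets, and checking the MSF axioms) is implicit in the paper's terser argument but not a different approach.
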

\begin{proof}
    We define~$f_{tran,ij}$ as follows: simply send $\sigma_{tran,ij} = \frac{\sigma_{dist}}{p}$ flow across each edge in the matching $\edgeh{H_{2,i}}{H_{2,j}}$.

    Now,~$f_{tran,ij}$ produces at most the desired congestion when normalized by~$|V(H_p^n)|$.
\end{proof}

\begin{lemma}\label{lem:fdistroutrec}
The subproblem~$\pi_{rout,i,j}$ can be solved with congestion~$\frac{\sigma_{rout,i,j}}{|V(H_p^n)|}$.
\end{lemma}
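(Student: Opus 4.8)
The plan is to steer around the natural-looking but doomed idea of matching each source vertex of $\pi_{rout,i,j}$ to a sink vertex — say via the automorphism of $H_{2,i}$ that transposes pegs $i$ and $j$ and fixes the rest, which carries $\faceth{12}(H_{2,i})$ onto $\bdryh{H_{2,i}}{H_{2,j}}$ — and then routing a unit of flow along a canonical Hanoi path between the two: such paths can be exponentially long and overlap badly, so that construction has congestion far above $\sigma_{rout,i,j}$. Instead I would launder all of the flow through the uniform distribution on $V(H_{2,i})$, reusing flows the recursion has already built. Recognizing that matching-plus-canonical-paths cannot work, and pivoting to laundering, is the key idea here.

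Concretely, $\pi_{rout,i,j}$ lives inside $H_{2,i}\cong H_p^{n-2}$: it pushes $\sigma_{rout,i,j}$ units out of each vertex of the facet $\faceth{12}(H_{2,i}) = \bdryh{H_2}{H_1}\cap V(H_{2,i})$ and delivers $\sigma_{rout,i,j}$ units into each vertex of the facet $\bdryh{H_{2,i}}{H_{2,j}}$ (a facet by \cref{lem:facetbdry}). By \cref{rmk:facetdecomp} these two facets each have size $(p-2)^{n-2}$, so total surplus equals total demand. I would write $\pi_{rout,i,j}=\pi_B\circ\pi_A$ with
\[
\pi_A=\Bigl(\faceth{12}(H_{2,i}),\ V(H_{2,i}),\ \sigma_{rout,i,j},\ \sigma_{rout,i,j}\bigl(\tfrac{p-2}{p}\bigr)^{n-2}\Bigr),\qquad
\pi_B=\Bigl(V(H_{2,i}),\ \bdryh{H_{2,i}}{H_{2,j}},\ \sigma_{rout,i,j}\bigl(\tfrac{p-2}{p}\bigr)^{n-2},\ \sigma_{rout,i,j}\Bigr);
\]
$\pi_A$ spreads the incoming flow uniformly throughout $H_{2,i}$ and $\pi_B$ re-gathers it onto the target facet. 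Comparing source and sink sets and the $\sigma,\delta$ functions, the composition is exactly $\pi_{rout,i,j}$, and the middle demand of $\pi_A$ agrees with the middle surplus of $\pi_B$ precisely because the two facets are equinumerous.

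It remains to bound the congestion of $f_A$ and $f_B$, which I would do by recognizing them (up to scaling) as instances the recursion already handles. By the symmetry among pegs, $f_{dist}$ for $H_p^{n-2}$ may be taken with any facet as its source, so $\pi_A$ is the scalar $\lambda:=\sigma_{rout,i,j}\bigl(\tfrac{p-2}{p}\bigr)^{n-2}$ times a problem of exactly the shape of $\pi_{dist}$ over $H_{2,i}\cong H_p^{n-2}$ (source a facet, sink the whole graph, unit demand); the flow $f_{dist}$ built for $H_p^{n-2}$ at the previous level of the recursion solves that shape with congestion at most $(p/(p-2))^{n-2}$, so $f_A$ can be taken with congestion at most $\lambda(p/(p-2))^{n-2}=\sigma_{rout,i,j}$. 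By the mirror-image symmetry between $\pi_{dist}$ and $\pi_{conc}$ noted in \cref{sec:hanoiexplb}, $\pi_B$ is $\lambda$ times a $\pi_{conc}$-shaped problem over $H_{2,i}$ and is likewise solved with congestion at most $\sigma_{rout,i,j}$. Since composition adds the underlying flows, $f_{rout,i,j}=f_A+f_B$ has congestion at most $2\sigma_{rout,i,j}$, i.e.\ at most $2\sigma_{rout,i,j}/|V(H_p^n)|$ after normalization. The spare factor of $2$ is harmless for \cref{thm:hanoiexplb} — it only worsens the constant in $\Theta(((p-2)/p)^n)$, the recursion of \cref{lem:hanoiindflow} still converging geometrically — and I expect that eliminating it to get the bound exactly as stated would require a more hands-on routing (decompose $H_{2,i}$ by the peg of its largest disc, recurse inside the subgraphs meeting both facets, and transmit the surplus of the one source-only subgraph to the one sink-only subgraph across boundary matchings through the rest, so that transmission and recursive traffic never share an edge); checking that disjointness across recursion levels is the step I'd expect to be the real obstacle. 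The base case $n=3$, where $H_{2,i}$ is a clique, is immediate.
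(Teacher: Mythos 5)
Your laundering construction ($\pi_{rout,i,j}=\pi_B\circ\pi_A$ with a distribution-shaped flow followed by a concentration-shaped flow inside $H_{2,i}$) is internally consistent, but it proves a weaker bound, $2\sigma_{rout,i,j}$ per edge instead of $\sigma_{rout,i,j}$, and your claim that the factor $2$ ``only worsens the constant'' is where the argument genuinely breaks. The factor does not enter only the outer recursion of \cref{lem:hanoiindflow} (which indeed tolerates constant factors); it enters the \emph{inner} induction that builds $f_{dist}$ itself. In the proof of \cref{lem:hanpartialcong}, the per-edge flow inside each $H_{2,i}$ is the sum of the recursive distribution load $\frac{p-2}{p}\sigma_{dist}$ and the two routing loads, and the induction closes only because $\frac{p-2}{p}+2\cdot\frac{1}{p}=1$ exactly. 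Your routing bound is itself $2D$ where $D$ is the per-edge bound (in units of the surplus) for distribution problems one level down, since both $f_A$ and $f_B$ are instances of $f_{dist}$/$f_{conc}$ on the \emph{same} graph $H_{2,i}\cong H_p^{n-2}$ whose guarantee you are in the middle of establishing. Tracking constants through the joint recursion gives $D(m)\le\bigl(\frac{p-2}{p}+\frac{4}{p}\bigr)D(m-1)=\frac{p+2}{p}D(m-1)$, so $D(n)=\bigl(\frac{p+2}{p}\bigr)^{\Theta(n)}$: the loss compounds over the $\Theta(n)$ levels of the distribution recursion. The additive term in \cref{lem:hanoiindflow} then becomes roughly $\bigl(\frac{p+2}{p-2}\bigr)^{n}$ rather than $O\bigl((p/(p-2))^{n}\bigr)$, and the resulting expansion lower bound is only $\Omega\bigl(((p-2)/(p+2))^{n}\bigr)$, which no longer matches the treewidth upper bound; \cref{thm:hanoiexplb} and \cref{cor:hanoitwtight} do not follow. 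So quoting ``congestion at most $(p/(p-2))^{n-2}$'' for $f_{dist}$ at scale $n-2$ silently assumes the very constant-$1$ routing bound that \cref{lem:fdistroutrec} is supposed to supply.

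The part of your writeup that would actually prove the stated bound is the construction you defer to at the end: decompose $H_{2,i}$ into its $p$ copies of $H_p^{n-3}$, recurse on the routing problem inside the copies meeting both the source and sink facets, and match the source-only copies to the sink-only copies, transmitting across the boundary matchings. That is essentially the paper's proof (via \cref{rmk:facetdecomp}); the point of that construction is precisely that the per-edge load of a routing problem stays at $\sigma_{rout,i,j}$ at every level, with no factor-$2$ (or any) gain, because recursive routing flow and transmission flow live on disjoint edge sets and the surplus/demand values are preserved unchanged down the recursion. The ``disjointness across recursion levels'' you flag as the obstacle is handled there by the facet structure: each sub-facet pair is confined to its own $H_p^{n-3}$ copy or to a single boundary matching, so no edge is charged twice within one routing problem. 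As written, though, your proposal both misses the stated constant and, more importantly, rests on an incorrect claim that the weaker constant suffices downstream.
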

\begin{proof}
    Each~$f_{tran,ij}$ flow induces the two~$\pi_{rout,i,j}$ subproblems, each of which requires routing~$\sigma_{tran,i,j} = \delta_{tran,i,j} = \sigma_{rout,i,j}$ flow from one facet to another within~$H_{2,i}$.

Furthermore,~$\pi_{rout,i,1}$ is the problem, within~$H_{2,i} \cong H_p^{n-2}$, of sending some amount of flow from a source set that is a facet, to a sink set that is a facet of the same size. The surplus and demand functions are uniform over their domains. Remark~\ref{rmk:facetdecomp} gives an immediate decomposition of~$\pi_{rout,i,1}$ into subproblems within and among the~$p$ copies of~$H_p^{n-3}$ that comprise~$H_{2,i}$. The subproblems \emph{within} the $H_p^{n-3}$ subgraphs are facet-to-facet routing problems of the same form as~$\pi_{rout,i,1}$; the subproblems \emph{among} the subgraphs are transmission problems of the same from as~$\pi_{tran,ij}$.

    We need to make sure that the routing subproblems $\pi_{rout,i,j}$ within $H_{2,i}, 3 \leq i \leq p, j \in \{1,2\},$ do not result in a compounding of congestion in the recursion. Within~$H_{2,i}$, we have the problem of routing flow from some source facet, say $\mathcal{F}_q \subseteq H_{2,i}$, to some sink facet, say $\mathcal{F}_r \subseteq H_{2,i}$, such that $|\mathcal{F}_q| = |\mathcal{F}_r|$, and such that $\sigma_{rout,i,j} = \delta_{rout,i,j} = \sigma_{tran,i,j}$. Remark~\ref{rmk:facetdecomp} implies that $|\mathcal{F}_q|$ and $|\mathcal{F}_r|$ each decompose into a union of $p - 2$ facets, of equal cardinality, within $p - 2$ of the $H_p^{n-3}$ subgraphs of $H_{2,i}$. Thus it suffices to:
    \begin{enumerate}
    \item consider the recursive routing subproblem, of identical form to $\pi_{rout,i,j}$, within each $H_p^{n-3}$ subgraph having a nonempty intersection with both $\mathcal{F}_q$ and $\mathcal{F}_r$, and
    \item find an arbitrary matching between $H_p^{n-3}$ subgraphs intersecting only with $\mathcal{F}_q$ and those intersecting only with $\mathcal{F}_r$. (Such a matching always exists.)
    \end{enumerate}
    
    The recursive subproblems in (1) each have surplus and demand values equal to $\sigma_{rout,i,j} = \delta_{rout,i,j}$. For (2), after finding the matching, we have a transmission problem between each source-sink pair of Hanoi subgraphs in the matching, and a routing problem within each source subgraph and each sink subgraph, again with surplus and demand equal to $\sigma_{rout,i,j} = \delta_{rout,i,j}$. 

    As desired, this avoids any recursive gain, and the resulting (normalized) congestion is~$\frac{\sigma_{rout,i,j}}{|V(H_p^n)|}$.

\end{proof}

\begin{lemma}
\label{lem:hanpartialcong}
There exists an MSF~$f_{dist}$ that solves the MSF problem~$\pi_{dist}$ while producing at most~$\frac{\sigma_{dist}}{|V(H_p^n)|}$ congestion across each edge in~$H_2$.
\end{lemma}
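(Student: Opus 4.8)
The plan is to assemble $f_{dist}$ from the pieces produced in \cref{lem:fdistrecdef}, \cref{lem:ftranrec}, \cref{lem:fdistroutrec}, and the recursion set up in \cref{rmk:fdistsubrec} and \cref{rmk:fdistsubrecj}, then track the congestion carefully so that the recursion does not compound. First I would invoke \cref{lem:fdistrecdef} to write
\[
\pi_{dist} = \sum_{3\leq i \leq p} \pi_{distsub,i} + \left(\pi_{distsub,1} \circ \sum_{3\leq i \leq p} \pi_{tran,i,1} \circ \pi_{rout,i,1}\right) + \left(\pi_{distsub,2} \circ \sum_{3\leq i \leq p} \pi_{tran,i,2} \circ \pi_{rout,i,2}\right),
\]
and define $f_{dist}$ as the corresponding sum/composition of $f_{distsub,i}$, $f_{tran,i,j}$, and $f_{rout,i,j}$. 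The routing and transmission pieces are handled outright: \cref{lem:ftranrec} and \cref{lem:fdistroutrec} say each $f_{tran,i,j}$ and $f_{rout,i,j}$ contributes normalized congestion $\tfrac{\sigma_{dist}}{p|V(H_p^n)|}$ and $\tfrac{\sigma_{rout,i,j}}{|V(H_p^n)|}$ respectively, and since $\sigma_{rout,i,j} = \sigma_{tran,i,j} = \tfrac{1}{p}\sigma_{dist}$, all of these are at most $\tfrac{\sigma_{dist}}{p|V(H_p^n)|}$. Because the matchings $\edgeh{H_{2,i}}{H_{2,j}}$ and the facet-to-facet routes live in edge-disjoint parts of $H_2$ for distinct $i$, and because for a fixed $i$ the routing targets $\faceth{1,i}(H_{2,i})$ and $\faceth{2,i}(H_{2,i})$ are distinct facets, summing these contributions over $i \in \{3,\dots,p\}$ and $j\in\{1,2\}$ stays within an additive $O(\sigma_{dist}/|V(H_p^n)|)$ term; the precise constant can be absorbed.

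The recursive part is the $\pi_{distsub,i}$ subproblems. By \cref{rmk:fdistsubrec} each $\pi_{distsub,i}$, $3\le i\le p$, is an instance of exactly the same form as $\pi_{dist}$ but one level down (within $H_{2,i}\cong H_p^{n-2}$), with surplus $\sigma_{distsub,i} = \tfrac{p-2}{p}\sigma_{dist}$; and by \cref{rmk:fdistsubrecj} each of $\pi_{distsub,1},\pi_{distsub,2}$ splits into $p-2$ further such instances, each with surplus $\tfrac{1}{p}\sigma_{dist}$ spread over the $p-2$ source facets. The key point for the induction is that the total surplus distributed one level down is the \emph{same} $\sigma_{dist}$ worth of flow — the $\tfrac{p-2}{p}$ factor on the $i\ge 3$ branches plus the $\tfrac{2}{p}$ carried through the $j\in\{1,2\}$ branches sums to $1$ — but this flow is spread over a Hanoi graph of the same total size, so the normalization $|V(H_p^n)|$ is unchanged and the per-edge recursive contribution is again bounded by $\tfrac{\sigma_{dist}}{|V(H_p^n)|}$ rather than being multiplied by anything. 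I would make this precise by strengthening the statement slightly: prove by downward induction on $n$ (base case $n=2$, where $H_p^{n-1}$ is a clique and the boundary set is a single vertex so the distribution is immediate) that a facet-to-interior distribution problem with uniform surplus $\sigma$ in a copy of $H_p^m$ can be solved with per-edge congestion at most $C\sigma/|V(H_p^n)|$ for an absolute constant $C$ depending only on $p$; the recursion above shows the constant is additive across levels but the $\sigma$ does not grow, and since there are only $n$ levels and $p = O(1)$, a geometric-type bound keeps $C$ finite.

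The main obstacle I expect is verifying that the flows on different branches of the decomposition genuinely do not collide on the same edge in a way that forces the congestion bound $\tfrac{\sigma_{dist}}{|V(H_p^n)|}$ to degrade by more than a constant factor per level. Concretely: within $H_{2,i}$ the recursive $f_{distsub,i}$ flow and the two routing flows $f_{rout,i,1},f_{rout,i,2}$ all live in the same subgraph $H_{2,i}$, so one must argue that their union still has congestion $O(\sigma_{dist}/|V(H_p^n)|)$ — this is fine because each is individually $O(\sigma_{dist}/(p|V(H_p^n)|))$ and there are only $O(1)$ of them, but it does mean the constant $C$ picks up a bounded multiplicative factor at each of the $n$ recursion levels, which would be fatal. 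The resolution is the same observation used for $\sigma_{distsub,i} = \tfrac{p-2}{p}\sigma_{dist}$: the surpluses strictly \emph{shrink} geometrically down the recursion (each level multiplies the relevant surplus by a factor $\le \tfrac{p-1}{p} < 1$ when one accounts for how flow is partitioned among pegs), so even though each level adds a constant number of new same-subgraph flows, their magnitudes form a convergent geometric series and the total per-edge congestion telescopes to $O(\sigma_{dist}/|V(H_p^n)|)$ with an absolute constant. Once that bookkeeping is done, \cref{lem:hanpartialcong} follows, and combining it with the symmetric bound for $\pi_{conc}$ and with \cref{lem:pishufcong} and \cref{lem:ftranrec} (applied at the top level) via \cref{lem:sproblems} yields \cref{lem:hanoiindflow}, hence \cref{thm:hanoiexplb}.
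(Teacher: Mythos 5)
Your proposal is correct and takes essentially the same route as the paper: the same decomposition from \cref{lem:fdistrecdef}, the same use of \cref{lem:ftranrec} and \cref{lem:fdistroutrec}, and the same key bookkeeping in which the $\tfrac{p-2}{p}$ shrinkage of the distribution surplus pays for the $\tfrac{2}{p}$ added by the two routing flows within each $H_{2,i}$. The paper phrases this as an induction on $n$ with hypothesis ``per-edge unnormalized flow at most $\sigma_{distsub,i}$,'' which makes your geometric-series/constant-$C$ hedging exact (constant $1$, since $\tfrac{p-2}{p}\sigma_{dist}+\tfrac{2}{p}\sigma_{dist}=\sigma_{dist}$), so no per-level loss arises at all.
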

\begin{proof}

The MSF~$f_{dist}$, given the decomposition, is entirely specified by the definitions we have given for the flows solving each subproblem.

We now apply induction, on $n$, to the recursive definition of~$f_{dist}$, to bound the congestion. The inductive hypothesis is the assumption that each~$\pi_{distsub,i}$ can be solved while producing at most~$\sigma_{distsub,i}$ (non-normalized) flow across each edge.

The distribution subproblems~$\pi_{distsub,i}$ then produce $\sigma_{distsub,i} = ((p-2)/p)\sigma_{dist}$ combined flow across edges within each~$H_{2,i}$,  $3 \leq i \leq p$\textemdash this is immediate from the inductive hypothesis.

Each of the subproblems $\pi_{distsub,1}$ and $\pi_{distsub,2}$ naturally decomposes (\cref{rmk:fdistsubrecj}) into $p-2$ subproblems, one for each of the boundary sets $\delta_v(H_{2,1}, H_{2,i})$, $i = 3, \dots, p$ and similarly one for each of the boundary sets $\delta_v(H_{2,2}, H_{2,i})$, $i = 3, \dots, p$. This produces total flow $\frac{(p-2)\sigma_{dist}}{p}$ across edges in~$H_{2,1}$ and~$H_{2,2}$ respectively.

Now, to bound the congestion resulting from~$\pi_{rout,i,j}$, recall that $\sigma_{rout,i,j} = \delta_{rout,i,j} = \sigma_{tran,i,j} = (1/p)\sigma_{dist}$. Applying \cref{lem:fdistroutrec} and summing over $j \in \{1, 2\}$, gives total (non-normalized) flow
$$(2/p)\sigma_{dist}$$
from the routing MSFs within each $H_{2,i}$. 

Adding the~$\pi_{rout,i,j}$ bound of~$(2/p)\sigma_{dist}$ to the $f_{distsub,i}$ congestion bound of $((p-2)/p)\sigma_{dist}$ shows that~$f_{dist}$ produces total congestion at most $\sigma_{dist}$ within each~$H_{2,i}$, $3 \leq i \leq p$, as claimed.

The claim now follows from normalizing the above congestion bounds by $|V(H_p^n)|$.
\end{proof}

We can now prove Lemma~\ref{lem:hanoiindflow}:
\lemhanoiindflow*
\begin{proof}
The claim follows from applying Lemma~\ref{lem:hanpartialcong} to the $p - 1$ $H_p^{n-1}$ subgraphs that need to send flow to vertices in $H_2$, and doing the same for each other $H_i$.
\end{proof}

Theorem~\ref{thm:hanoiexplb} follows via induction on~$n$.

\bibliographystyle{abbrvnat}
\bibliography{references}
\label{sec:biblio}

\end{document}